\DeclareMathAlphabet{\mathpzc}{OT1}{pzc}{m}{it}
\definecolor{darkBlue}{RGB}{85,123,151}
\definecolor{lightBlue}{RGB}{114,178,223}
\definecolor{lightPeach}{RGB}{245,178,157}
\definecolor{darkPeach}{RGB}{226,103,67}
\definecolor{darkPeach}{RGB}{226,103,67}
\definecolor{lightGreen}{RGB}{82,157,54}
\definecolor{darkGreen}{RGB}{20,105,0}
\definecolor{lightBrown}{RGB}{190,166,159}
\def\co{\colon\thinspace}
\newcommand{\F}{\mathbb{F}}
\newcommand{\sA}{\mathcal{A}} 
\newcommand{\TsA}{\widetilde{\mathcal{A}}}
\newcommand{\sI}{\mathcal{I}}
\newcommand{\strip}{\mathbf{a}\times[-1,1]}
\newcommand{\HFhat}{\widehat{\mathit{HF}}}
\newcommand{\CFD}{\widehat{\mathit{CFD}}}
\newcommand{\Alg}{\mathcal{A}}
\newcommand{\id}{\operatorname{id}}
\newcommand{\gen}{\blackdiamond}
\newcommand{\specialgen}{\pluscirc}
\newcommand{\uvAlg}{{\mathcal R}}
\newcommand{\sampleAlg}{{\mathcal B}}
\renewcommand{\k}{{\mathbf k}}
\newcommand{\RHT}{T_{2,3}}
\newcommand{\bgamma}{\boldsymbol{\gamma}}
\newcommand{\openU}{\mathring{\nu}}
\newcommand{\handle}
	{\raisebox{-1.5pt}{\includegraphics[scale=0.5]{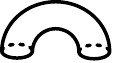}}}
\newcommand{\cut}
	{\raisebox{-1.5pt}{\includegraphics[scale=0.5]{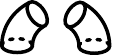}}}
\newtheorem{theorem}{Theorem}
\newtheorem{proposition}[theorem]{Proposition}
\newtheorem*{namedtheorem}{\theoremname}
\newcommand{\theoremname}{testing}
\theoremstyle{definition}
\newtheorem*{remarko}{Remark}
\newtheorem*{aside}{Aside}
\newtheorem*{example}{Example} 
\title[A mnemonic for the Lipshitz--Ozsv\'ath--Thurston correspondence]{A mnemonic for the\\ Lipshitz--Ozsv\'ath--Thurston correspondence}
\author{Artem Kotelskiy}
\address{Department of Mathematics \\ Indiana University}
\email{artofkot@gmail.com}
\urladdr{https://artofkot.github.io/}
\author{Liam Watson}
\address{Department of Mathematics \\ University of British Columbia}
\email{liam@math.ubc.ca}
\urladdr{https://personal.math.ubc.ca/~liam/}
\author{Claudius Zibrowius}
\address{Department of Mathematics \\ University of Regensburg}
\email{claudius.zibrowius@posteo.net}
\urladdr{https://cbz20.raspberryip.com/}
\begin{document}

\begin{abstract} 
	%
	%
	%
	%
	%
	%
When $\k$ is a field, type~D structures over the algebra $\k[u,v]/(uv)$ are equivalent to immersed curves decorated with local systems in the twice-punctured disk. Consequently, knot Floer homology, as a type~D structure over $\k[u,v]/(uv)$, can be viewed as a set of immersed curves. With this observation as a starting point, given a knot $K$ in $S^3$, we realize the immersed curve invariant $\widehat{\mathit{HF}}(S^3 \smallsetminus \openU(K))$~\cite{HRW} by converting the twice-punctured disk to a once-punctured torus via a handle attachment. This recovers a result of Lipshitz, Ozsv\'ath, and Thurston~\cite{LOT-main} calculating the bordered invariant of $S^3 \smallsetminus \openU(K)$ in terms of the knot Floer homology of $K$. 
\end{abstract}

\maketitle

\labellist \small
\pinlabel $v$ at 19 9, \pinlabel $u$ at 125 9 \pinlabel $\mathbf{a}$ at 64 25
\endlabellist
\parpic[r]{
 \begin{minipage}{45mm}
 \centering
 \captionsetup{type=figure}
 \includegraphics[scale=0.75]{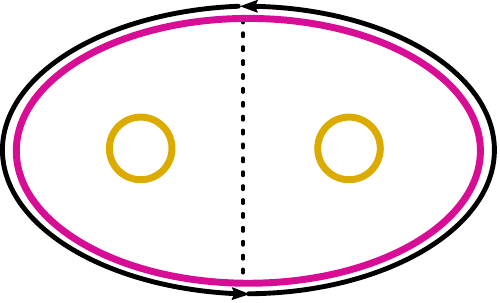}
 \captionof{figure}{An arc system associated with the algebra $\uvAlg$.}
 \label{fig:disk-with-quiver}
  \end{minipage}%
  }
Recent work interprets relative versions of homological invariants in terms of immersed curves, including Heegaard Floer homology for manifolds with torus boundary~\cite{HRW}, as well as link Floer homology~\cite{pqMod}, singular instanton knot homology~\cite{HHK1},  and Khovanov homology~\cite{KWZ} for 4-ended tangles. In particular~\cite[Section~5]{KWZ} classifies type~D structures over a quiver algebra associated with a surface with boundary in terms of immersed curves on this surface; compare~\cite{HKK,HRW}. Denoting a field by $\k$, perhaps the simplest algebra to illustrate these classification results is $\uvAlg=\k[u,v]/(uv)$. This algebra arises as the path algebra of a quiver that is associated with the decorated surface shown in Figure~\ref{fig:disk-with-quiver}. Work of Lekili and Polishchuk~\cite{LP-1,LP-2} describes the role of $\uvAlg$, and its relationship with the twice-punctured disk, in the context of homological mirror symmetry; see, in particular,~\cite[Figures~1 and~2]{LP-2}. And, the algebra $\uvAlg$ equipped with the Alexander and $\delta$ gradings $\text{gr}(u)=(-1,1),~\text{gr}(v)=(1,1)$ plays a central role in knot Floer homology; see~\cite{DHST}, for instance.

\bigskip

\medskip

\begin{theorem}\label{thm:classification}
Every bigraded type~D structure over $\uvAlg$ is equivalent to an immersed curve (decorated with local systems) in the twice-punctured disk, which is unique up to regular homotopy (and equivalence of local systems). 


\end{theorem}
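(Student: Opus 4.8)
The plan is to set up a dictionary between reduced bigraded type~D structures over $\uvAlg$ and immersed multicurves with local systems on the twice-punctured disk, and then transport the (essentially classical) classification of the former along this dictionary. First I would reduce to the case of a \emph{reduced} type~D structure $(V,\delta)$, meaning that $\delta\co V\to\uvAlg\otimes_{\k}V$ has no component landing in $\k\!\cdot\!1\otimes V$: every finite-dimensional type~D structure over $\uvAlg$ is homotopy equivalent to a reduced one (cancel the components of $\delta$ with an idempotent coefficient, one at a time), the reduced model is unique up to isomorphism, and homotopy-equivalent type~D structures have isomorphic reduced models. It therefore suffices to classify reduced bigraded type~D structures up to isomorphism and to match that classification with immersed multicurves up to regular homotopy.

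The structural core is that the monomials of $\uvAlg$ are exactly the powers of $u$ and the powers of $v$, so for a reduced $(V,\delta)$ one may split $\delta=\delta_u+\delta_v$ into the parts recording the $u^{n}\otimes(-)$ and the $v^{n}\otimes(-)$ components; because $uv=vu=0$, the type~D structure equation decouples into $\delta_u^2=0$ and $\delta_v^2=0$ with all cross terms vanishing. Equivalently, a reduced type~D structure over $\uvAlg$ is nothing but a finite-dimensional representation of $\uvAlg$ viewed as the path algebra of the quiver of Figure~\ref{fig:disk-with-quiver} (one vertex, loops $u$ and $v$) modulo $uv=vu=0$, that is, of the simplest \emph{gentle algebra}. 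I would then invoke the classification of indecomposable finite-dimensional modules over a gentle algebra into \emph{strings} and \emph{bands} --- this is \cite[Section~5]{KWZ} specialized to $\uvAlg$, and compare \cite{HKK,HRW} --- so that, after a change of basis over $\k$, the differential of an indecomposable has for underlying graph either a finite ``zigzag'' in which the edges alternate between $u$-powers and $v$-powers (a string), or a cyclic such zigzag carrying, at one of its vertices, an indecomposable automorphism of the common generating vector space $W$, taken up to conjugacy (a band). Together with the Krull--Schmidt property for type~D structures, this writes every reduced bigraded type~D structure over $\uvAlg$ uniquely --- up to reordering and isomorphism of summands --- as a finite direct sum of strings and bands compatible with the bigrading. \textbf{Proving this normal form, and pinning down exactly which strings and bands are compatible with the gradings $\gr(u)=(-1,1)$ and $\gr(v)=(1,1)$, is the main work; everything afterwards is bookkeeping.}

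Next I would realize the pieces geometrically, regarding the twice-punctured disk as a disk in which the arc $\mathbf a$ of Figure~\ref{fig:disk-with-quiver} separates the $u$-puncture from the $v$-puncture, so that any immersed arc or closed curve transverse to $\mathbf a$ meets it in finitely many points and, between consecutive ones, winds some number of times around exactly one puncture. A string $x_0-x_1-\cdots-x_m$ is realized by placing its generators as points of $\mathbf a$ --- the Alexander grading, read off from $\gr(u)$ and $\gr(v)$, fixes their relative heights --- joining consecutive $x_i,x_{i+1}$ by an embedded arc that winds around the $u$- or $v$-puncture according to the label on the corresponding edge and on the side dictated by that edge's direction, and pushing the two loose ends of the zigzag into the boundary. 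A band is realized by the corresponding immersed \emph{closed} curve, decorated with the local system $(W,\varphi)$ given by its automorphism, the conjugacy class of $\varphi$ being precisely what ``equivalence of local systems'' records. Conversely, an immersed multicurve with local systems in minimal position and transverse to $\mathbf a$ reads off a reduced bigraded type~D structure in exactly this fashion, with nullhomotopic components corresponding to acyclic summands (which is why one passes to the reduced model). Thus strings account for the arc components and bands for the closed components, and the role of the bigrading is exactly to provide the rigidity needed to single out a homotopy class rather than only a homotopy class up to winding around punctures.

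Uniqueness then follows by combining three inputs: the Krull--Schmidt uniqueness of the string/band decomposition; the fact that homotopy-equivalent type~D structures have isomorphic reduced models; and the classical uniqueness of minimal position for immersed multicurves on a surface up to regular homotopy (Hass--Scott), together with the observation that the combinatorial data of a string or band is a complete invariant of its regular-homotopy class. Beyond the structural step I expect only conventional subtleties: fixing the correspondence between arrow directions and winding sides, excluding nullhomotopic and puncture-parallel components, and --- over a field $\k$ that is not algebraically closed --- phrasing ``indecomposable automorphism up to conjugacy'' via companion matrices of powers of irreducible polynomials when identifying higher-rank local systems.
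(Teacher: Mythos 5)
Your route is genuinely different from the one the paper follows. The paper (deferring to \cite[Section~5]{KWZ} and \cite{HRW}) is geometric and algorithmic: it draws the type~D structure as a train track in the twice-punctured disk, passes to a simply faced precurve using vertically and horizontally simplified bases (this is where Proposition~\ref{prp:vert-hor} and the isomorphism $\varphi$ enter), writes the matrix of $\varphi$ in $\mathbf{LPU}$ normal form, and then removes crossover arrows winding counterclockwise by the arrow-sliding algorithm of \cite{HRW}; the arrows that cannot be removed between parallel compact components assemble into the local systems. You instead propose to import the algebraic classification of indecomposables over a gentle algebra and only afterwards translate strings into arcs and bands into loops with local systems. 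That is a legitimate alternative---it is essentially the route of \cite{HKK}, which the paper explicitly cites as an independent proof---and it buys a cleaner uniqueness statement (Krull--Schmidt plus completeness of the string/band combinatorics) at the cost of hiding the constructive content the paper wants to foreground and later uses in Section~\ref{sub:handle}.

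One step as written must be repaired. A reduced type~D structure $(V,d)$ over $\uvAlg$ is \emph{not} a finite-dimensional representation of $\uvAlg$: it is a differential on the free module $\uvAlg\otimes_{\sI}V$, i.e.\ an object of the homotopy category of complexes of free $\uvAlg$-modules, and the isomorphisms you must quotient by are $\uvAlg$-linear base changes $V\to\uvAlg\otimes_{\sI}V$, not $\k$-linear ones. So the input you need is not the Butler--Ringel classification of string and band \emph{modules} but the classification of indecomposables of the homotopy category of a gentle algebra by \emph{homotopy} strings and bands (Bekkert--Merklen; this is what \cite{HKK} and \cite[Section~5]{KWZ} establish in geometric language). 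The difference is real: for instance $[\gen_1\xleftarrow{u}\gen_2\xrightarrow{u^{2}}\gen_3]$ decomposes as a type~D structure via the base change $\gen_1\mapsto \gen_1+u\otimes\gen_3$, a splitting invisible to $\k$-linear representation theory; likewise your decoupling into ``$\delta_u^2=0$ and $\delta_v^2=0$'' needs unpacking, since the $u$-part of the structure equation couples the coefficients of different powers of $u$ after multiplication in $\uvAlg$ (the cross terms do vanish, because $uv=vu=0$). With the correct classification in hand your normal form---alternating zigzags, and cyclic zigzags carrying an indecomposable automorphism---is right, and the dictionary and uniqueness argument go through; one small convention to fix is that the loose ends of a string should run into the punctures, as in the paper's construction, rather than into the boundary of the disk.
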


As stated, this is a special case of a theorem proved in~\cite[Section~5]{KWZ} appealing to techniques from~\cite{HRW} (see also~\cite{pqMod}). The observation could alternatively be extracted from~\cite[Section~3.4]{HRW} (see the remark in Section~\ref{sub:geo} accompanying Figure~\ref{fig:cut-annulus}), and also follows from work of Haiden, Katzarkov, and Kontsevich~\cite{HKK}; see \cite[Section~1.8]{KWZ} for more discussion. 
We will review the algebraic objects in Section~\ref{sub:alg} and, without reproducing the proof in full, explain some key steps in this special case in Section~\ref{sub:geo}. 
Theorem~\ref{thm:classification} gives rise to a graphical interpretation $\bgamma$ for (a variant of) knot Floer homology ${}^\uvAlg\mathit{CFK}(Y,K)$, which is a bigraded type~D structure over $\uvAlg$. 
Our proof is constructive and, in particular, foregrounds the role of vertically and horizontally simplified bases that arise in knot Floer homology. An explicit example of a curve~$\bgamma$ in the twice-punctured disk is shown on the left of Figure~\ref{fig:h}. This particular curve corresponds to the type~D structure associated with the right-hand trefoil $\RHT$  in $S^3$: 
\begin{equation*}\label{RHT}[ \gen_1 \xleftarrow{u}\gen_2 \xrightarrow{v}\gen_3] = {}^\uvAlg\mathit{CFK}(S^3, \RHT)\end{equation*}
Note that, while the local system in this example is trivial, these are easy to add to the picture in general, being equivalent to isomorphism classes of flat vector bundles over the curves in question. There is an obvious handle attachment, identifying the two punctures in the disk, which yields a once-punctured torus. Denote this handle attachment by $\handle$ and consider the curve $\handle({\bgamma})$. Note that, given a choice of meridian on the torus, this operation has an inverse that we will denote by~$\cut$. 

Denote by $\widehat{\mathit{HF}}(M)$ the immersed curve in the once-punctured torus associated with a manifold $M$ with torus boundary~\cite{HRW}. This is equivalent to the bordered Heegaard Floer invariant of $M$~\cite{LOT-main}. Here is the mnemonic we propose:

\labellist \small
\pinlabel $\mathbf{a}$ at 77 19
\pinlabel $\mathbf{a}_\bullet$ at 297 24
\pinlabel $\mathbf{a}_\circ$ at 340 35
\endlabellist
\begin{figure}[t]
 \includegraphics[width=\textwidth]{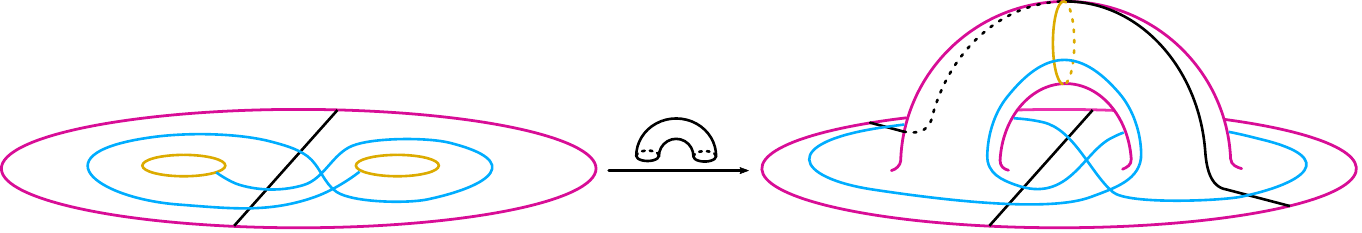}
 \caption{Adding a handle to the twice-punctured disk results in the once-punctured torus. This carries immersed curves to immersed curves; the immersed curve on the left corresponds to the type~D structure ${}^\uvAlg\mathit{CFK}(S^3, \RHT)=[\gen_1 \xleftarrow{u}\gen_2 \xrightarrow{v}\gen_3]$, which is carried to the curve $\widehat{\mathit{HF}}(M)$ where $M$ is the trefoil exterior $S^3\smallsetminus\openU(K)$.}\label{fig:h}
\end{figure}

\begin{theorem}\label{thm:LOT}
If $\bgamma$ is a curve representing the knot Floer invariant ${}^\uvAlg\mathit{CFK}(S^3,K)$ over the two-element field, then $\handle({\bgamma})$ is equivalent to $\widehat{\mathit{HF}}(M)$, where $M$ is the exterior of the knot $K$. Conversely, given a meridian for $M=S^3\smallsetminus\openU(K)$, the curve $\cut\big(\HFhat(M)\big)$ represents the knot Floer type~D structure for $K$. 
\end{theorem}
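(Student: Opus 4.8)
\quad The plan is to translate everything into bordered Heegaard Floer homology and compare with the explicit model of \cite{LOT-main}. By \cite{HRW}, the immersed curve $\HFhat(M)$ in the once-punctured torus $T_\bullet$ is the geometric incarnation of the type~D structure $\CFD(M)$ over the torus algebra $\sA=\sA(T^2)$; since $\handle(\bgamma)$ is again an immersed curve in $T_\bullet$, it determines such a type~D structure by the same dictionary, and it suffices to show that this type~D structure is homotopy equivalent to $\CFD(M)$. For the comparison I would use the model of $\CFD(M)$ from \cite{LOT-main}: picking a vertically simplified and a horizontally simplified basis of $\CFKminus(K)$, the complex $\CFD(M)$ --- parametrised by the meridian and the Seifert longitude, which fix coordinates on $T_\bullet$ --- has one $\iota_0$-generator per basis element, a ``vertical chain'' of $\iota_1$-generators for each vertical arrow, a ``horizontal chain'' for each horizontal arrow, and a single ``unstable chain'' joining the distinguished elements of the two bases, whose length and Reeb decoration are controlled by $2\tau(K)$.

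I would then make the constructive proof of Theorem~\ref{thm:classification} explicit on this same pair of bases. Each basis element becomes an intersection point of $\bgamma$ with the arc $\mathbf{a}$; a differential coefficient $u^{k}$ (resp.\ $v^{k}$) is realised by a sub-arc of $\bgamma$ winding $k$ times about the $u$-puncture (resp.\ the $v$-puncture); and the two distinguished basis elements are precisely the points at which $\bgamma$ has to be completed to a closed curve rather than wound back about a puncture. Tracking this through $\handle$: the arc $\mathbf{a}$ survives as one parametrising arc $\mathbf{a}_\bullet$ of $T_\bullet$, the handle contributes a second parametrising arc $\mathbf{a}_\circ$ running across it (the two arcs $\mathbf{a}_\bullet,\mathbf{a}_\circ$ are shown on the right of Figure~\ref{fig:h}), and the sub-arcs of $\bgamma$ that used to wind about the two punctures now thread through the handle and pick up intersections with $\mathbf{a}_\circ$; these are the $\iota_1$-generators. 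Reading off Reeb chord labels from the cyclic order in which $\handle(\bgamma)$ meets $\mathbf{a}_\bullet$ and $\mathbf{a}_\circ$ near the puncture of $T_\bullet$ should turn the $u$-winding and $v$-winding sub-arcs into the vertical and horizontal chains of the \cite{LOT-main} model verbatim. The one genuinely global ingredient is the completion arc: after $\handle$ it is forced to wrap the puncture of $T_\bullet$ some signed number of times, and the crux is to show that this number is $2\tau(K)$ --- equivalently, that the two distinguished generators have Alexander gradings differing by $2\tau(K)$, a reflection of $\tau(K)$ being the Alexander grading of the generator of the vertical homology of $\CFKhat(K)$ --- so that the completion arc assembles into exactly the unstable chain, of the correct one of its three types, prescribed in \cite{LOT-main}. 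This winding count is the only place where $\tau(K)$, rather than purely local data, enters, and it is the step I expect to be the main obstacle; the rest is bookkeeping, and once the two type~D structures over $\sA$ are matched, homotopy equivalence gives $\handle(\bgamma)\simeq\HFhat(M)$. (Alternatively, one could sidestep \cite{LOT-main} and instead pair $\handle(\bgamma)$ with the standard type~A modules for solid tori, using the curve pairing of \cite{HRW}, to recover $\HFhat$ of every Dehn filling of $M$ from the surgery formula, together with the fact that $\CFD(M)$ is detected by these pairings.)

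The converse is then essentially formal. The meridian of $K$ is the belt circle of the handle, so once a meridian for $M=S^3\smallsetminus\openU(K)$ has been fixed, $\cut$ --- cutting $T_\bullet$ along it --- is inverse to $\handle$ up to isotopy; hence $\cut\big(\HFhat(M)\big)=\cut\big(\handle(\bgamma)\big)=\bgamma$, which by Theorem~\ref{thm:classification} represents ${}^\uvAlg\mathit{CFK}(S^3,K)$. One only needs that $\cut$ descends to regular-homotopy-and-local-system classes, which is immediate from the naturality built into Theorem~\ref{thm:classification}; this cut-open description is precisely the one in \cite[Section~3.4]{HRW}.
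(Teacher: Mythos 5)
Your proposal is correct and follows essentially the same route as the paper: translate the curve segments of $\bgamma$ in the two annuli into pieces of a type~D structure over the torus algebra, match them with the stable chains of \cite[Theorem~A.11]{LOT-main}, identify the handle segment joining the distinguished generators $\xi_0,\eta_0$ with the unstable chain, and obtain the converse formally from $\cut\circ\handle=\id$. The only point of difference is the step you flag as the main obstacle — showing the completion arc winds $2\tau(K)$ times relative to the Seifert framing: the paper sidesteps this computation by taking $\handle$ with the blackboard framing, observing that the resulting unstable chain is $\bullet\xrightarrow{\rho_{12}}\bullet$, and then letting \cite[Theorem~A.11]{LOT-main} (which lists the unstable chain for every framing) identify the parametrization as $(\mu,2\tau)$, so no separate Alexander-grading argument is needed.
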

Figure~\ref{fig:h} illustrates this theorem for the right-hand trefoil knot; the proof is given in Section~\ref{sub:handle}. 

\begin{remarko}
There is an apparent ambiguity in the statement of Theorem~\ref{thm:LOT}, namely the number of twists (along the belt of the handle $\handle$) one adds to the non-compact component of the curve $\bgamma$. However, recall that the curve $\widehat{\mathit{HF}}(M) \subset \partial M$ is null-homologous in $M$~\cite[Sections~5 and~6]{HRW}; to resolve the ambiguity it is enough to identify the once-punctured torus obtained after adding the handle with the boundary of the knot exterior (minus a small disk). We identify the arc $\mathbf{a}_\bullet$ from Figure~\ref{fig:h} with the meridian~$\mu$, and the second arc $\mathbf{a}_\circ$ with a longitude $\lambda$ of $K$. This pair provides a bordered structure, in the sense of Lipshitz--Ozsv\'ath--Thurston~\cite{LOT-main}. Concerning the framing~$\lambda$: On one hand there is a preferred choice given by the Seifert longitude $\lambda_0$, and the corresponding identification is depicted on the right of Figure~\ref{fig:framings}. On the other hand, it is often simplest to work with the ``blackboard framing'', which simply joins the endpoints of $\bgamma$ without new twisting as they run over the handle, as in Figure~\ref{fig:h}. In general, this latter gives the $2\tau(K)$-framed longitude $\lambda_{2\tau}=2\tau\cdot\mu+\lambda_0$, where the value $\tau(K)$ is the Ozsv\'ath--Szab\'o concordance invariant (we describe how to extract this value below). This choice of longitude is illustrated on the left in Figure~\ref{fig:framings}. These choices differ by Dehn twists along~$\mu$; note that in both cases $[\handle(\bgamma)] = [\lambda_0]$ in homology. Different choices of twisting precisely correspond to different unstable chains appearing in~\cite[Theorem~A.11]{LOT-main}, due to Lipshitz, Ozsv\'ath, and Thurston, which Theorem~\ref{thm:LOT} re-casts. This result generalizes to knots in arbitrary three-manifolds; see Section~\ref{sub:discussion} for further discussion. 
\end{remarko}

\labellist \small
\pinlabel $\mu$ at -4.5 56 \pinlabel $\lambda_{2\tau}$ at 176 56
\pinlabel $\mu$ at 230.5 56 \pinlabel $\lambda_{0}$ at 415 47
\endlabellist
\begin{figure}[t]
	\centering
 \includegraphics[width=0.96\textwidth]{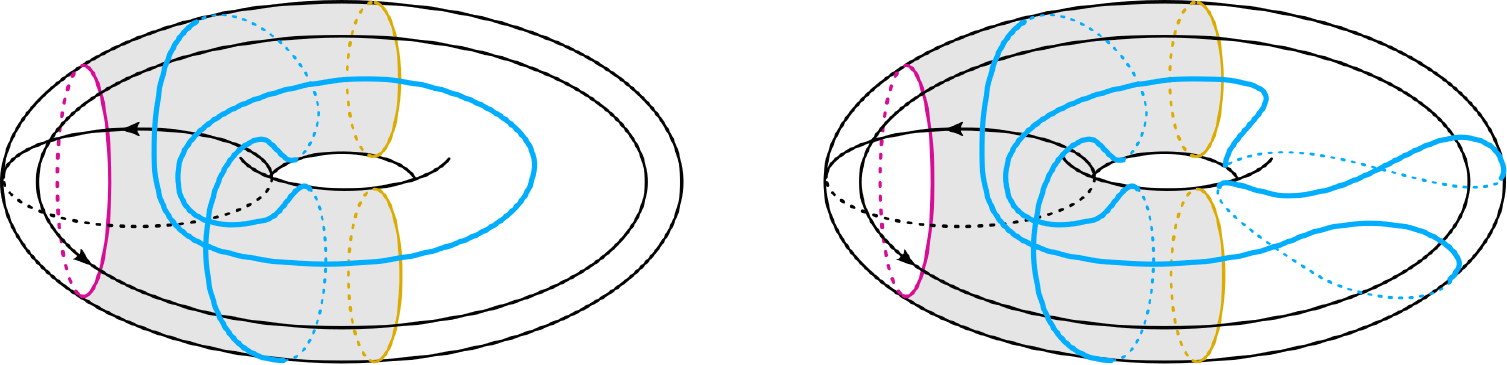}
 \caption{Choices of framing on the right-hand trefoil invariant: $\lambda_{2\tau}=2\mu+\lambda_0$ on the left and the Seifert longitude $\lambda_0$ on the right. The resulting curve $\gamma$ on the boundary of the trefoil exterior coincides with~\cite[Figure~9]{HRW-prop}.}\label{fig:framings}
\end{figure}

A graphical interpretation of the family of concordance homomorphism $\{\phi_i\}$ due to Dai, Hom, Stoffregen, and Truong~\cite{DHST} is given by Hanselman and the second author~\cite{HW}. This can be read off the current picture: Denote by $\bgamma_0(K)\subset \bgamma(K)$ the non-compact curve in the twice-punctured disk associated with ${}^\uvAlg\mathit{CFK}(S^3,K)$. (The curve $\bgamma_0(K)$ is a concordance invariant~\cite[Proposition 2]{HW}.) Orient $\bgamma_0(K)$ so that it leaves from the $v$-puncture; this is the left-hand puncture in Figure~\ref{fig:disk-with-quiver}, which records the $v^i$ coefficient maps. Contracting the arc $\mathbf{a}$ to a point gives a wedge of annuli $A_v\vee A_u$, and the oriented segments of $\bgamma_0(K)$ around the $v$-puncture give a collection of homotopy classes in $\pi_1A_v\cong\langle t \rangle$, where the generator $t$ winds counterclockwise. As a result, given $\bgamma_0(K)$ with our choice of orientation we obtain  $t^{n_1}t^{n_2}\cdots t^{n_k}$ for the $k$ oriented segments winding around the $v$-puncture, and $$\phi_i(K)=\sum_{n_j = \pm i} \text{sign}(n_j) \qquad  \qquad 
\tau(K)=\sum_{j=1}^k n_j
$$
so that $\tau(K)$ is simply the winding number of $\bgamma$ around the $v$-puncture. One can check that this gives $\tau(\RHT)=\phi_1(\RHT)=1$. A more complicated example is shown in Figure~\ref{fig:cable}. The same construction works with the $u$-puncture instead of the $v$-puncture, due to a symmetry interchanging $u$ and $v$ in knot Floer homology~\cite{OS-hfk-0}.

\labellist 
\pinlabel $v$ at 9 50 \pinlabel $v^2$ at 56 50 \pinlabel $v$ at 96 50
\pinlabel $u$ at 417 27 \pinlabel $u^2$ at 417 74 \pinlabel $u$ at 417 109
\endlabellist
\begin{figure}[t]
 \includegraphics[width=\textwidth]{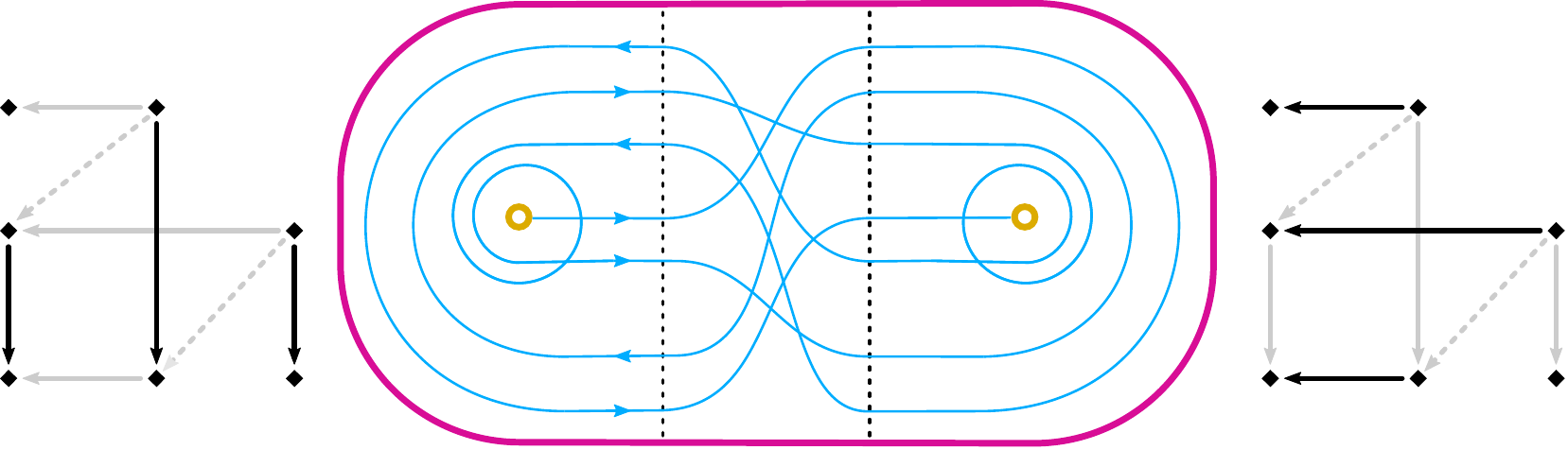}
 \caption{The curve associated with $ {}^\uvAlg\mathit{CFK}(S^3,K)$ when $K$ is the $(2,1)$-cable of the right-hand trefoil. The vertical and horizontal complexes are shown beside the relevant annuli; including the diagonal arrows describes the invariant over $\mathbf{k}[u,v]$. Applying Theorem~\ref{thm:LOT} results in the curve-invariant in the torus, which can be compared with~\cite[Figure~1]{HW}. Orientating the curve as shown, we calculate $\phi_1(K)=0$, $\phi_2(K)=1$, and $\tau(K)=2$. 
 }\label{fig:cable}
\end{figure}

Relevant to concordance is the behaviour under connect sum. Denote by $_{\uvAlg}\mathit{HFK}(S^3,K)$ the knot Floer invariant obtained as the homology of a complex  $\mathit{CFK}(S^3,K)$ freely generated over $\uvAlg$. In Section~\ref{sub:sum} we prove: 
\begin{theorem}\label{thm:connected_sum}
  The knot Floer homology over $\uvAlg$ of a connected sum of two knots is equal to the wrapped Lagrangian Floer homology of the corresponding curves:
  \[ {}_{\uvAlg}\mathit{HFK}(S^3, \text{m} K \# K') \cong \mathit{HF} (\bgamma(K), \bgamma(K')) \]
\end{theorem}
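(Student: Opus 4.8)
The plan is to identify both sides of the asserted isomorphism with the homology of the morphism complex between the two type~D structures ${}^{\uvAlg}\mathit{CFK}(S^3,K)$ and ${}^{\uvAlg}\mathit{CFK}(S^3,K')$ over $\uvAlg$: the left-hand side using standard structural properties of knot Floer homology, the right-hand side using a pairing theorem. For the left-hand side, I would first invoke the connected sum (K\"unneth) formula \cite{OS-hfk-0}. Working with the analogous complexes over $\k[u,v]$ and then reducing modulo $uv$ (which is harmless, since $\mathit{CFK}$ is freely generated and the tensor product commutes with this base change), one obtains a homotopy equivalence of $\uvAlg$-complexes $\mathit{CFK}(S^3,\text{m}K\#K')\simeq \mathit{CFK}(S^3,\text{m}K)\otimes_{\uvAlg}\mathit{CFK}(S^3,K')$. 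Mirroring reflects a doubly-pointed Heegaard diagram and hence dualizes the associated complex; combined with the $u\leftrightarrow v$ symmetry of knot Floer homology \cite{OS-hfk-0}, this identifies $\mathit{CFK}(S^3,\text{m}K)$ with the dual $\uvAlg$-complex $\mathit{CFK}(S^3,K)^\vee := \Hom_{\uvAlg}(\mathit{CFK}(S^3,K),\uvAlg)$, up to an overall grading shift that I would keep track of. Since $\mathit{CFK}(S^3,K)$ is finitely generated and free over $\uvAlg$, there is a natural identification $\mathit{CFK}(S^3,K)^\vee\otimes_{\uvAlg}\mathit{CFK}(S^3,K')\cong \Hom_{\uvAlg}(\mathit{CFK}(S^3,K),\mathit{CFK}(S^3,K'))$, and this last complex, with its induced differential, is precisely the morphism complex $\operatorname{Mor}$ of the two type~D structures. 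Passing to homology, the left-hand side of the theorem becomes $H_*\operatorname{Mor}({}^{\uvAlg}\mathit{CFK}(S^3,K),{}^{\uvAlg}\mathit{CFK}(S^3,K'))$.

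For the right-hand side I would use Theorem~\ref{thm:classification} together with a pairing theorem. The correspondence of Theorem~\ref{thm:classification} is part of a functor from bigraded type~D structures over $\uvAlg$ to immersed curves with local systems in the twice-punctured disk which carries the algebraic morphism complex $\operatorname{Mor}$ to the \emph{wrapped} Lagrangian Floer complex of the associated curves; this is the pairing theorem of \cite{KWZ} (compare \cite{HKK,HRW}, and on the bordered side the gluing formula of \cite{LOT-main}). Applying it to $\bgamma(K)$ and $\bgamma(K')$ yields $H_*\operatorname{Mor}({}^{\uvAlg}\mathit{CFK}(S^3,K),{}^{\uvAlg}\mathit{CFK}(S^3,K'))\cong \mathit{HF}(\bgamma(K),\bgamma(K'))$, and composing this with the identification from the previous paragraph proves the theorem. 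It then remains to check that the resulting isomorphism respects the Alexander and $\delta$ gradings; on the curve side these are recorded by the winding of generators around the two punctures of Figure~\ref{fig:disk-with-quiver}, so this reduces to comparing $\gr(u)=(-1,1)$ and $\gr(v)=(1,1)$ with the grading conventions for curves used in \cite{KWZ,HRW}, after absorbing the grading shift coming from mirroring.

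The main obstacle is the pairing theorem itself. The delicate point is that the functor of Theorem~\ref{thm:classification} must send morphism complexes over $\uvAlg$ to \emph{wrapped} Floer complexes, with the wrapping around the two punctures of the disk reproducing the full $\uvAlg$-module structure --- in particular the infinite $u$- and $v$-towers --- rather than merely the finite-dimensional hat-flavoured pairing; aligning these conventions, and matching them with the gluing map of bordered Heegaard Floer homology, is where the real work lies. (Alternatively, one could bypass the K\"unneth and mirror formulas and view the theorem directly as an instance of the pairing theorem for $1$-tangles: $\text{m}K\#K'$ cuts along a two-sphere into a pair of $1$-tangles whose curve invariants are $\bgamma(K)$ and $\bgamma(K')$, and the pairing theorem computes the knot Floer homology of the union --- which concentrates essentially the same difficulty in the same place.) Granting the pairing theorem with the correct wrapping and gradings, the remaining ingredients --- the K\"unneth formula over $\uvAlg$, the identification of mirroring with dualization, and the bigrading bookkeeping --- are routine.
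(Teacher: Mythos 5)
Your proposal is correct and follows essentially the same route as the paper: K\"unneth formula for connected sums, mirroring as dualization, the identification of the (box) tensor product with the dual against the second factor as the morphism complex $\operatorname{Mor}({}^{\uvAlg}\mathit{CFK}(S^3,K),{}^{\uvAlg}\mathit{CFK}(S^3,K'))$, and finally the pairing theorem identifying $H_*(\operatorname{Mor})$ with wrapped Lagrangian Floer homology, which the paper likewise takes as an external input (citing \cite[Theorem~1.5]{KWZ}) rather than proving. The only differences are cosmetic (you phrase the tensor products at the level of free $\uvAlg$-modules rather than type~D structures, and you invoke the $u\leftrightarrow v$ symmetry in the mirroring step), so the step you flag as ``the real work'' is exactly the one the paper also delegates to the cited classification/pairing results.
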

A proof is given in Section \ref{sub:sum}. As is the case with Theorem~\ref{thm:classification}, the proof 
appeals to the techniques in~\cite[Section~5]{KWZ}.

\section{Algebraic objects}\label{sub:alg} Let $\sampleAlg$ be a bigraded unital algebra over a field $\k$, with a subring of idempotents $\mathcal{I}$ being equal to $\k^n$. The object of interest is a bigraded chain complex over $\sampleAlg$: Let $V$ be a finite dimensional bigraded left $\mathcal{I}$-module, and suppose further that we have a morphism of  $\mathcal{I}$-modules
\[d\co V \to \sampleAlg\otimes_{\mathcal{I}} V\]
satisfying the compatibility condition
\[(\mu \otimes \id_V)\circ ( \id_\sampleAlg \otimes d ) \circ d = 0\]
where $\mu$ denotes multiplication in $\sampleAlg$. 
In our setting the morphism $d$ has bidegree $(a,\delta)=(0,1)$,
and the pair $(V,d)$ is a bigraded type~D structure over $\sampleAlg$.

A couple of remarks: We work with left actions for consistency with~\cite{LOT-main}, and our type~D structures will always be reduced, which means that $d(x)=\sum_i b_i\otimes y_i$ where none of the $b_i\in\sampleAlg$ are invertible. This is justified by the fact that any bigraded type~D structure is homotopy equivalent to a reduced one~\cite[Lemma~2.16]{KWZ}.

Such algebraic structures appear naturally in a variety of settings. For example, given a knot $K$ in $S^3$, the knot Floer invariant $\mathit{HFK}(S^3,K)$, due to Ozsv\'ath--Szab\'o~\cite{OS-hfk-0} and to Rasmussen~\cite{Rasm_hfk}, can be viewed as a ${\k[u,v]}$-module obtained as the homology of a chain complex $\mathit{CFK}(S^3,K)$ over the ring $\k[u,v]$~\cite[Section~3]{Zemke1}. This complex is freely generated as a module over this ring. As such, it is natural to view $\mathit{CFK}(S^3,K)$ as a type~D structure over $\k[u,v]$, which we denote by  ${}^{\k[u,v]}\mathit{CFK}(S^3,K)$. 

Given a type~D structure over $\sampleAlg$, a homomorphism of $\mathcal I$-algebras $\sampleAlg\to \sampleAlg'$ gives rise to an induced type~D structure over $\sampleAlg'$. In particular, the quotient $\k[u,v]\to\k[u,v]/(uv)$  defines a truncated version of the knot Floer type~D structure: 
\[ {}^\uvAlg\mathit{CFK}(S^3,K)=  {}^{\k[u,v]}\mathit{CFK}(S^3,K)\big|_{uv=0}\]
The associated module object ${}_\uvAlg\mathit{CFK}(S^3,K)$ (see~\cite[Lemma~2.20]{LOT-main}) is the knot Floer complex freely generated over $\uvAlg$, which is studied in depth by Dai, Hom, Stoffregen, and Truong~\cite{DHST} and Ozsváth and Szabó~\cite{OS_latest}. A concise formula connecting the type~D structure and the associated module object uses the box tensor product (see \cite[Section~2.3.2 and Proposition~2.3.18]{LOT-bim}, and also the beginning of Section~\ref{sub:handle} for a similar construction):
$${}_\uvAlg\mathit{CFK}(S^3,K)={}_{\uvAlg}\uvAlg_{\uvAlg} \boxtimes {}^\uvAlg\mathit{CFK}(S^3,K)$$
We note that there are two further type~D structures obtained from ${}^\uvAlg\mathit{CFK}(S^3,K)$ by setting the appropriate variables equal to zero: 
the horizontal type~D structure $C^{\mathbf{h}}$ and the vertical type~D structure $C^{\mathbf{v}}$. For instance, in the case of the type~D structure ${}^\uvAlg\mathit{CFK}(S^3,T_{2,3})$ (see Figure~\ref{fig:h}), we have 
$$C^{\mathbf{h}}=[\gen_1 \xleftarrow{u}\gen_2  \quad   \gen_3] \qquad \qquad C^{\mathbf{v}}= [\gen_1 \quad \gen_2  \xrightarrow{v} \gen_3]$$ 
As the type~D structures are reduced, the isomorphisms of vector spaces 
$C^{\mathbf{h}}\big|_{u=0} \cong \widehat{\mathit{HFK}}(S^3,K) \cong C^{\mathbf{v}}\big|_{v=0}$  induce an isomorphism
\[\varphi\co {}C^{\mathbf{h}}\big|_{u=0} \to C^{\mathbf{v}}\big|_{v=0}\]
 We have:
\begin{proposition}\label{prp:vert-hor}The data specified by the triple $({}C^{\mathbf{h}}, ~ C^{\mathbf{v}}, ~\varphi)$ is equivalent to the type~D structure ${}^{\uvAlg}\mathit{CFK}(S^3,K)$.
\end{proposition}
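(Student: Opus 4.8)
The plan is to show that, for \emph{reduced} type~D structures, the two sides of the proposition literally consist of the same data, the essential input being that $uv=0$ in $\uvAlg$. Write $\mathfrak m=(u,v)\subset\uvAlg$ for the augmentation ideal; since $uv=0$, as a $\k$--vector space $\mathfrak m=u\k[u]\oplus v\k[v]$. I would first observe that if $C=(V,d)$ is a reduced type~D structure over $\uvAlg$ then $d(V)\subseteq\mathfrak m\otimes_\k V$, so $d$ splits uniquely as $d=d_u+d_v$ with $d_u(V)\subseteq u\k[u]\otimes_\k V$ and $d_v(V)\subseteq v\k[v]\otimes_\k V$, both homogeneous of bidegree $(0,1)$.

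Next I would check that the type~D relation decouples. Expanding $(\mu\otimes\id_V)\circ(\id\otimes d)\circ d$ with $d=d_u+d_v$ produces four terms, and the two mixed ones have coefficients in $u\k[u]\cdot v\k[v]=0$; so the relation for $d$ is equivalent to the pair of relations $(\mu\otimes\id)(\id\otimes d_u)(d_u)=0$ and $(\mu\otimes\id)(\id\otimes d_v)(d_v)=0$. In other words $C^{\mathbf h}=(V,d_u)$ and $C^{\mathbf v}=(V,d_v)$ --- precisely the structures obtained from $C$ by the quotients $\uvAlg\to\k[u]$ and $\uvAlg\to\k[v]$ --- are reduced type~D structures over $\k[u]$ and $\k[v]$ sharing the underlying bigraded module $V$, and conversely any two such structures together with a bigraded identification of their underlying modules reassemble, via $d:=d_u+d_v$, to a reduced type~D structure over $\uvAlg$, the relation holding automatically for the same reason. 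Since $C^{\mathbf h}|_{u=0}$ and $C^{\mathbf v}|_{v=0}$ are both $(V,0)$, with homology $\widehat{\mathit{HFK}}(S^3,K)$, the identification of the two underlying modules is exactly the isomorphism $\varphi$ appearing in the statement; thus $C$ and the triple $(C^{\mathbf h},C^{\mathbf v},\varphi)$ carry the same data.

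Finally I would promote this to an equivalence of homotopy equivalence classes. We may assume all type~D structures are reduced \cite[Lemma~2.16]{KWZ}. A morphism $f\colon C\to C'$ over $\uvAlg$ has coefficients in $\k\oplus u\k[u]\oplus v\k[v]$ and hence splits as $f=f_0+f_u+f_v$; running the same ``no mixed terms'' computation on the morphism equation shows it is equivalent to $f_0+f_u$ being a morphism $C^{\mathbf h}\to(C')^{\mathbf h}$ over $\k[u]$ and $f_0+f_v$ a morphism $C^{\mathbf v}\to(C')^{\mathbf v}$ over $\k[v]$, and these agree modulo $\mathfrak m$, i.e.\ intertwine $\varphi$ with the analogous datum $\varphi'$ for $C'$. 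The identical bookkeeping applies to homotopies, so a homotopy equivalence of triples assembles to one of type~D structures and vice versa. Specializing $C={}^{\uvAlg}\mathit{CFK}(S^3,K)$ then gives the proposition. I do not expect a genuine obstacle here: the whole argument is a formal consequence of the single identity $uv=0$, which decouples the $u$-- and $v$--halves of every structure in sight (differential, morphism, homotopy). The only thing requiring care is being systematic about that decoupling, and in particular keeping track of the gluing datum $\varphi$ as an honest part of the structure --- it is what records how the underlying modules of $C^{\mathbf h}$ and $C^{\mathbf v}$ are matched, and different bigraded choices of $\varphi$ can produce non-isomorphic $C$.
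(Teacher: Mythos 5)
Your argument is correct and matches the paper's, which simply declares the proposition ``immediate from the definitions'': the content in both cases is that $uv=0$ makes the differential split as $d=d_u+d_v$ with the type~D relation (and the morphism and homotopy equations) decoupling into their $\k[u]$- and $\k[v]$-parts, with $\varphi$ recording the identification of underlying modules. You have merely written out the details the paper leaves implicit (the paper also notes the statement can be seen via the precurve discussion of Section~\ref{sub:geo}, which you do not need).
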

\begin{proof} This is immediate from the definitions, but also follows from the discussion in Section~\ref{sub:geo} outlining the proof of Theorem~\ref{thm:classification}. \end{proof}


\section{Geometric objects}\label{sub:geo}
Often, when an invariant of a topological object is a type~D structure over an algebra~$\sampleAlg$, the invariant is only well-defined up to homotopy equivalence. As such, it is of general interest to be able to classify homotopy equivalence classes of type~D structures. Such classification turns out to be possible when the algebra $\sampleAlg$ is isomorphic to an endomorphism algebra of certain objects in the (wrapped) Fukaya category of a surface~$\Sigma$. 
In this case, homotopy equivalence classes of type~D structures over $\sampleAlg$ correspond to certain curves (decorated with local systems) immersed in~$\Sigma$. 
This is a powerful structural result allowing us to translate algebra into geometry, 
something not so often encountered in mathematics.
The classification result is established in~\cite{HKK} using representations of nets; an alternate, more geometric approach is given in~\cite{HRW}, which appeals to train tracks in a surface. The simplification algorithm proved in \cite{HRW} that is central to the classification is further developed and leveraged in~\cite{KWZ,pqMod}, where train tracks reappear as precurves. We focus on this latter approach.

\labellist \small
\pinlabel $v$ at 19 40, \pinlabel $u$ at 115 40
\endlabellist
\parpic[r]{
	\begin{minipage}{45mm}
		\centering
		\captionsetup{type=figure}
		\includegraphics[scale=0.75]{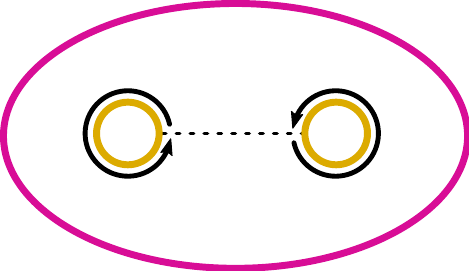}
		\captionof{figure}{A dual arc system associated with the algebra $\uvAlg$.}
		\label{fig:disk-with-quiver-dual}
	\end{minipage}%
}
To provide a useful toy model for the classification result, we restrict to type~D structures over~$\uvAlg$.
The algebra $\uvAlg$ indeed arises as the endomorphism algebra of an object in the (wrapped) Fukaya category of a surface. 
The surface is the oriented, twice punctured disk $D$ and the object is an arc connecting the two punctures; see Figure~\ref{fig:disk-with-quiver-dual}. 
More explicitly, from this figure we can extract a quiver with a single vertex  corresponding to the object in the Fukaya category, and arrows labeled $u$ and $v$ corresponding to the two paths around the punctures in $D$:
\[
\setlength\mathsurround{0pt}
v
\begin{tikzcd}[row sep=2cm, column sep=1.5cm]
\gen
\arrow[leftarrow,in=145, out=-145,looseness=5]{rl}{}
\arrow[leftarrow,out=35, in=-35,looseness=5]{rl}{}
\end{tikzcd}
u
\]
It is useful to view this quiver as a deformation retract of the twice-punctured disk. The algebra $\uvAlg$ is the path algebra of this quiver modulo the relations $uv=0=vu$. In terms of Figure~\ref{fig:disk-with-quiver-dual}, these relations have the effect that paths that run along the dashed arc are zero in $\uvAlg$, while paths that only wind around a single puncture are non-zero. 

To match the setup in~\cite{KWZ} a different viewpoint, which is in some sense dual to the previous one, will be more useful. Namely, choose an arc $\mathbf{a}$ that is properly embedded in $(D,\partial D)$ and that divides $D$ into a pair of annuli, as illustrated in Figure~\ref{fig:disk-with-quiver}. From this, we can also recover the quiver: The vertex corresponds to the arc $\mathbf{a}$ and the arrows correspond to paths on the boundary of $D$. Again, it is useful to consider the quiver as a deformation retract that contracts the arc to the quiver vertex. The relations that we impose on the quiver algebra to obtain $\uvAlg$ now have a different geometric interpretation: Paths that at an endpoint of the dashed arc continue along the boundary of $D$  are zero in~$\uvAlg$, while paths that at such a point always choose to follow the dashed arc are non-zero; see also~\cite[Section~5.1]{KWZ}.

The choice of arc $\mathbf{a}$ is an example of an arc system on \(D\), in the sense of~\cite[Section~5.1]{KWZ}.  In general, an arc system, giving rise to an algebra $\sampleAlg$, allows for a graphical representation of type~D structures over $\sampleAlg$ as sub-objects of the surface. These show up as train tracks in~\cite{HRW} and precurves in~\cite{KWZ}; we describe them explicitly in the case of $\uvAlg$ and the twice-punctured disk $D$. It will be convenient to specify the annuli $D\smallsetminus \mathbf{a}=A_v \sqcup A_u$; these annuli are called faces.

Let $(V,d)$ be a type~D structure over $\uvAlg$. Given a homogeneous basis $\{x_1,\ldots,x_n\}$ for~$V$ (as a vector space over $\k$, say), we can pick $n$ distinct points on $\mathbf{a}$ and label these with the $x_i$.  To describe the morphism $d$, suppose $b\otimes x_j$ is a summand of $d(x_i)$. Then, since $b$ is a sum of polynomials, we may assume without loss of generality that $b$ is $\lambda u^k$ or $\lambda v^k$ for some $\lambda \in \k$ and $k>0$. (The assumption that this power is non-zero comes from our restriction to reduced type~D structures.) There are two cases: if $b=\lambda u^k$ then we connect $x_i$ to $x_j$ by an oriented arc immersed in $A_u$ that winds algebraically $k$ times in the positive direction; and if $b=\lambda v^k$ then we connect $x_i$ to $x_j$ by an oriented curve immersed in $A_v$ that winds algebraically $k$ times in the positive direction. In both cases the arc is decorated by the field coefficient $\lambda$, noting that when $\lambda=1$ our convention is to drop the label. In particular, when $\k$ is the two-element field, only the arcs are needed. Lastly, if an intersection point $x_i$ does not have outgoing arcs in the annulus~$A_u$, we connect $x_i$ straight to the $u$-puncture; we do the same for the $A_v$ annulus and the $v$-puncture. To see that this information, having added all of the arcs described, can be viewed as an immersed train track in $D$, we simply require that every curve is perpendicular to $\mathbf{a}$ in a neighborhood of each $x_i$. An explicit example is given in Figure~\ref{fig:run-exp-1}. Note that in this example there are no arcs going to interior punctures.

\labellist 
\pinlabel $A_v$ at 80 25
\pinlabel $A_u$ at 180 25
\tiny
\pinlabel $-\!1$ at 95 97
\endlabellist
\begin{figure}[ht]
	\centering
 \includegraphics[scale=1.1]{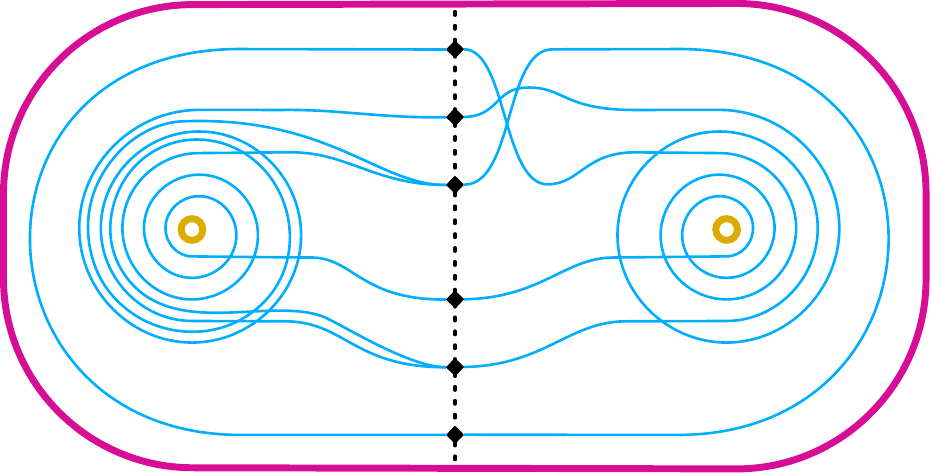}
 \caption{A sample train track representation of a type~D structure over $\uvAlg$. Note that every curve segment is oriented so that it runs counter-clockwise around a puncture, so this orientation is omitted. Similarly, unlabeled edges (of which there are all but one in this example) carry the decoration $\lambda=1$.}\label{fig:run-exp-1}
\end{figure}

These train tracks can be put into a simple form that makes them easier to manage: We require that they are simply faced in the sense of~\cite[Definition 5.9]{KWZ}. In the present setting, this amounts to expressing 
\[D=A_v\cup_{\mathbf{a}\times\{1\}}\big(\mathbf{a}\times[-1,1]\big)\cup_{\mathbf{a}\times\{-1\}} A_u\]
and requiring that the train track restricted to $A_u$ and to $A_v$ describes a type~D structure over $\k[u]$ and $\k[v]$, respectively, with the property that each $x_i$ connects to at most one $x_j$. For an illustration see Figure~\ref{fig:run-exp-2}. All of the interesting switching is confined to the strip $\mathbf{a}\times[-1,1]$, which amounts to a graphical interpretation (reading from right to left) of an isomorphism $\varphi\co V_u\to V_v$, where $V_v$ and $V_u$ are the underlying vector spaces associated with the type~D structure in each face. As such, the general fact that we can restrict to simply faced train tracks (see~\cite[Proposition~5.10]{KWZ}) boils down to the fact that type~D structures over $\uvAlg$ admit vertically and horizontally simplified bases~\cite[Definition~11.23]{LOT-main}---though not necessarily one that is simultaneously vertically and horizontally simplified, whence the choice of isomorphism.  This last assertion explains the presence of $\varphi$; compare Proposition~\ref{prp:vert-hor}. We remark that this is one step in which the grading plays a key role. 

\labellist 
\pinlabel $\underbracket{\phantom{aaaaaaaaaa}}$ at 142 0 
\pinlabel $\mathbf{a}\times[-1,1]$ at 142 -12
\pinlabel $\overbrace{\phantom{aaaaaaaaaa}}$ at 142 138
\pinlabel {$\protect{\left(\begin{smallmatrix}0&0&1\\ \!\!\!\!-\!1&1&0\\1&0&0 \end{smallmatrix}\right)}$} at 142 154
\tiny
\pinlabel $-\!1$ at 123 91 \pinlabel $1$ at 134 91.5
\endlabellist
\begin{figure}[ht]
\vspace{1cm}
\centering
 \includegraphics[scale=1.1]{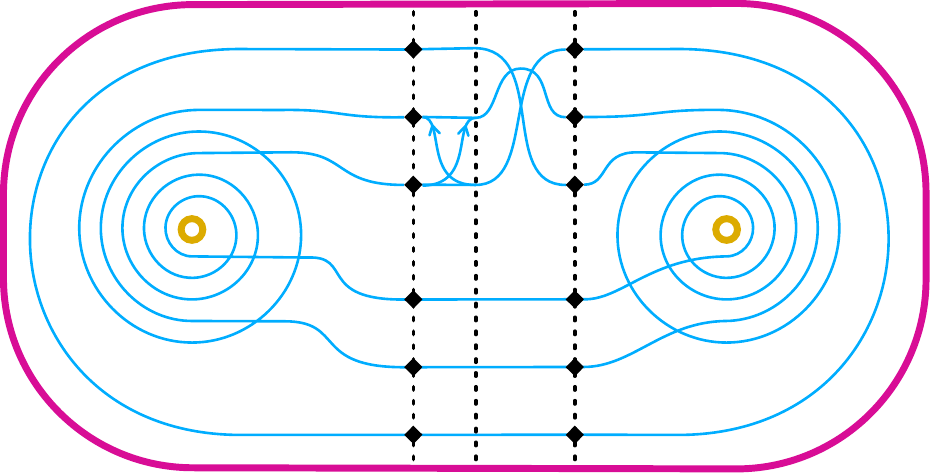}
 \vspace{0.5cm}
 \caption{Expressing the train track from Figure~\ref{fig:run-exp-1} as a simply faced precurve. 
 The isomorphism described can be read off the tracks in $\mathbf{a}\times[-1,1]$ from right to left; in the present setting the resulting matrix block-decomposes into two $3\times 3$ parts, of which one is shown and the other is the identity matrix.
 }\label{fig:run-exp-2}
\end{figure}


\begin{aside} 
		We make a digression to describe that, in order to classify type~D structures in terms of immersed curves, other choices of surface decomposition are possible. Namely, another option would be to (1) cut the annuli $A_u$ and $A_v$ further, as described in Figure~\ref{fig:cut-annulus}; (2)~associate with this new geometric picture a different algebra $\mathcal{E}$; (3) interpret the type~D structure ${}^\uvAlg V$ as a type~D structure ${}^\mathcal{E} W$ over the algebra $\mathcal{E}$; (4) apply the methods from~\cite{HRW} to interpret ${}^\mathcal{E} W$ as an immersed curve. To describe this in more detail, let us focus first on the annulus $A_v$ in step~(2).

		Consider Figure~\ref{fig:cut-annulus}.
		Any type~D structure ${}^{\k[v]}V_{\gen}$ may be regarded as a type~D structure 
		$V_{\gen}\oplus V_\bullet\oplus V_\circ$ over the quiver algebra $\k[\bullet \xrightarrow{a} \gen \xrightarrow{b} \circ]$  together with an isomorphism  between the vector spaces $V_\bullet$ and $V_\circ$. To repackage the latter into a type~D structure without extra data, we consider a subalgebra generated by idempotents $\iota_\bullet+\iota_\circ$ and $\iota_\gen$ (because eventually  the  idempotents $\iota_\circ$ and $\iota_\bullet$ are identified). 
		Writing $\iota_\specialgen=\iota_\bullet+\iota_\circ$, the  subalgebra is equal to \[\mathcal{C}= \k[\specialgen \overset{a}{\underset{b}\rightleftarrows} \gen]/(ba)\] 
		The type~D structure ${}^{\k[v]}V_{\gen}$ can now be interpreted as a type~D structure 
		${}^{\mathcal{C}}(V_{\gen}\oplus V_{\specialgen})$: Generators $\gen$ in ${}^{\k[v]}V_{\gen}$ and ${}^{\mathcal{C}}(V_{\gen}\oplus V_{\specialgen})$ are in one-to-one correspondence, while a differential $\gen \xrightarrow{v^n} \gen$ in ${}^{\k[v]}V_{\gen}$  corresponds to the sequence of differentials 
		$$\gen \xrightarrow{b}\underbrace{\specialgen \xrightarrow{ab} \specialgen \xrightarrow{ab} \cdots \xrightarrow{ab} \specialgen}_{n \text{ generators}} \xrightarrow{a} \gen $$
		in ${}^{\mathcal{C}}(V_{\gen}\oplus V_{\specialgen})$. 
		To add the second annulus $A_u$ to the picture,  given a type~D structure ${}^{\uvAlg}V_{\gen}$ one translates it into a type~D structure ${}^\mathcal{E}W$ over the algebra
		$$\mathcal{E}= \k[\specialgen_1 \overset{a_1}{\underset{b_1}\rightleftarrows} \gen \overset{b_2}{\underset{a_2}\rightleftarrows} \specialgen_2 ]/(b_1a_1,b_2 a_2,a_1b_2,a_2b_2)$$
		via the dictionary
		\begin{align}
		\gen \xrightarrow{v^n} \gen&\quad \mapsto \quad \gen \xrightarrow{b_1}\underbrace{\specialgen_1 \xrightarrow{a_1b_1} \specialgen_1 \xrightarrow{a_1b_1} \cdots \xrightarrow{a_1b_1} \specialgen_1}_{n \text{ generators}} \xrightarrow{a_1} \gen \label{ping1} \\
		\gen \xrightarrow{u^n} \gen& \quad\mapsto \quad \gen \xrightarrow{b_2}\underbrace{\specialgen_2 \xrightarrow{a_2b_2} \specialgen_2 \xrightarrow{a_2b_2} \cdots \xrightarrow{a_2b_2} \specialgen_2}_{n \text{- generators}} \xrightarrow{a_2} \gen \label{ping2} 
    \end{align}
		With this type~D structure ${}^\mathcal{E}W$ in hand, the methods from~\cite{HRW} allow us to interpret ${}^\mathcal{E} W$ as an immersed curve. 

    A possible difficulty might arise from the following: The passage from ${}^\uvAlg V_{\gen}$ to ${}^\mathcal{E}W$ does not respect homotopy equivalences: There exist homotopy equivalent type~D structures ${}^\uvAlg V_{\gen} \simeq {}^\uvAlg V'_{\gen}$ such that the corresponding type~D structures ${}^\mathcal{E}W$ and ${}^\mathcal{E}W'$ are not homotopy equivalent (take, for example, ${}^\uvAlg V_{\gen}=[\gen\xleftarrow{v}\gen\xrightarrow{v}\gen]$ and ${}^\uvAlg V'_{\gen}=[\gen\xleftarrow{v}\gen]\oplus[\gen])$. This problem is mitigated by the fact that the curves associated with ${}^\mathcal{E}W$ and ${}^\mathcal{E}W'$ will differ only by how many times their ends wrap around the two punctures, and initially we regard such curves as the same. Another way to mitigate this problem is to find vertically and horizontally simplified bases  $\{\xi_i\},~\{\eta_j\}$ for ${}^\uvAlg V_{\gen}$ at the outset,  and apply the operation~\eqref{ping1} to the basis $\{\xi_i\}$ and  the operation~\eqref{ping2} to the basis $\{\eta_i\}$. This will ensure that the curve associated with ${}^\mathcal{E}W$ will not have extra wrapping around the punctures (and, of course, there may be non-trivial train tracks in the middle as in Figure~\ref{fig:run-exp-2}).
\end{aside}
\labellist 
\small
\pinlabel $v$ at 15 42  \pinlabel $\iota_{\gen}$ at 85 42
\pinlabel $a$ at 170 25 \pinlabel $b$ at 170 62 \pinlabel $\iota_{\gen}$ at 215 42
\pinlabel $\iota_\bullet$ at 142 10 \pinlabel $\iota_\circ$ at 142 80
\endlabellist
\begin{figure}[t]
	\centering
	\includegraphics[scale=1]{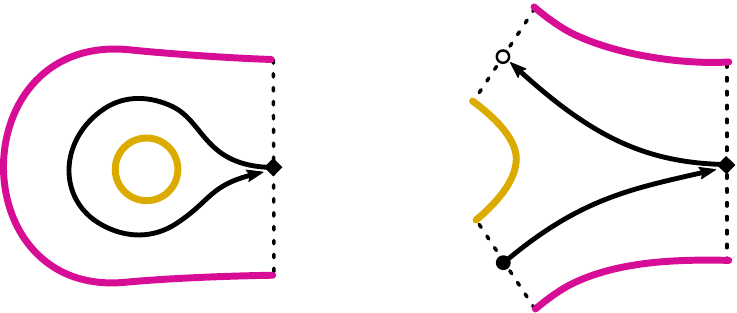}
	\caption{A quiver associated with the annulus describing the algebra $\k[v]$, and a quiver for an algebra associated with an additional cut.  
	}\label{fig:cut-annulus}
\end{figure}

\labellist 
\pinlabel {$\protect{\left(\begin{smallmatrix}0&1\\ 1&0\end{smallmatrix}\right)}$} at 17 -2
\pinlabel {$\protect{\left(\begin{smallmatrix}1&\lambda\\ 0&1\end{smallmatrix}\right)}$} at 65 -2
\pinlabel {$\protect{\left(\begin{smallmatrix}\lambda&0\\ 0&1\end{smallmatrix}\right)}$} at 113 -2
\tiny 
\pinlabel $\lambda$ at 59 17 
\pinlabel $\lambda$ at 106.5 29 
\pinlabel $\lambda$ at 59 57 \pinlabel $-\!\lambda$ at 72 56.5
\pinlabel $\lambda$ at 106.5 71.5  \pinlabel $\frac{1}{\lambda}$ at 117 60.5
\endlabellist
\parpic[r]{
 \begin{minipage}{60mm}
 \captionsetup{type=figure}
 \includegraphics[scale=1.25]{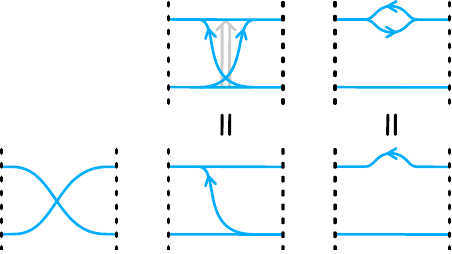}
  \vspace{0.15cm}
 \captionof{figure}{%
 	A crossing, a crossover switch, and a passing loop, each with the elementary matrix they represent by reading paths right-to-left. 
	To declutter pictures, we will be using the pictures in the lower row where the arrows pointing left-to-right are dropped.}\medskip
 \label{fig:anatomy}
  \end{minipage}
}

We now return to the main text and make some comments about our conventions, reviewing~\cite[Section~5.6]{KWZ}. The object appearing in the strip $\mathbf{a}\times[-1,1]$ represents an invertible matrix, where the $i^{\text{th}}$ column records the edges leaving the point labeled $x_i$ on $\mathbf{a}\times\{-1\}$ ($\mathbf{a}$ is oriented from top to bottom in our figures, so that $\{-1\}$ is the right most edge of the strip). Using the row-reduction algorithm, this matrix can be factorized into elementary matrices corresponding to three geometric sub-objects, as shown in Figure~\ref{fig:anatomy}. These sub-objects differ from the ones in~\cite{HRW}, where the coefficients are restricted to the two-element field. New in the context of general fields are the non-zero coefficients $\lambda\in \k$, recorded on the crossover switches (these correspond to crossover arrows from~\cite{HRW}), as well as the passing loops, which introduce coefficients at various points. The main point is that when two coefficients appear consecutively on one edge connecting the source and the target, the coefficients multiply, while if two edges share a common source and a common target, the coefficients on those edges add. We note that the geometric objects contain not only the information encoding $\varphi$ (reading right-to-left) but also the information about the inverse $\varphi^{-1}$ (reading left-to-right). As such, some of the data in the crossover switches and in the passing loops is superfluous. In particular, to simplify pictures below, we will record only the arrows running right-to-left. 

%

It is convenient to put the matrix representing $\varphi$ into a normal form, namely the {\bf LPU} normal form: 
Any invertible matrix can be written as a product of a {\bf L}ower triangular matrix, a {\bf P}ermutation matrix (which may be multiplied, additionally, by a diagonal matrix to change coefficients), and an {\bf U}pper triangular matrix.  
For example, the matrix $\left(\begin{smallmatrix}\lambda&1\\ 1&0\end{smallmatrix}\right)$ may be expressed as 
\[\left(\begin{smallmatrix}1&\lambda\\ 0&1\end{smallmatrix}\right)
\left(\begin{smallmatrix}0&1\\ 1&0\end{smallmatrix}\right)
=
\left(\begin{smallmatrix}1&0\\ \lambda^{-1}&1\end{smallmatrix}\right)
\left(\begin{smallmatrix}\lambda&0\\ 0&-\lambda^{-1}\end{smallmatrix}\right)
\left(\begin{smallmatrix}1&\lambda^{-1}\\ 0&1\end{smallmatrix}\right)\]
and this identity has the following geometric interpretation: 
\[
\labellist 
\pinlabel $=$ at 78 15
\tiny
\pinlabel $\lambda$ at 11 15 
\pinlabel $\frac{1}{\lambda}$ at 101 21
\pinlabel $\lambda$ at 139 26 \pinlabel $-\frac{1}{\lambda}$ at 137 4
\pinlabel $\frac{1}{\lambda}$ at 167 13 
\endlabellist
\includegraphics[scale=1.1]{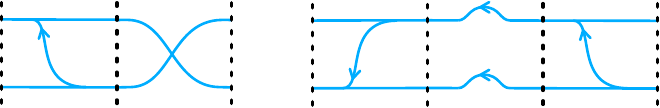}
\]
More generally, writing the matrix for $\varphi$ in {\bf LPU} normal form corresponds to modifying the train track in the region $\mathbf{a}\times[-1,1]$ such that the downward arrows are on the left, the upward arrows are on the right, and there is a permutation in the middle. A complete list of geometric moves corresponding to different factorizations into elementary matrices is given in~\cite[Figure~23]{KWZ}. As an example, the reader should compare Figures~\ref{fig:run-exp-2} and~\ref{fig:run-exp-3}.

\labellist \endlabellist
\parpic[r]{
 \begin{minipage}{80mm}
 \centering
 \bigskip
 \captionsetup{type=figure}
 \includegraphics[scale=1.1]{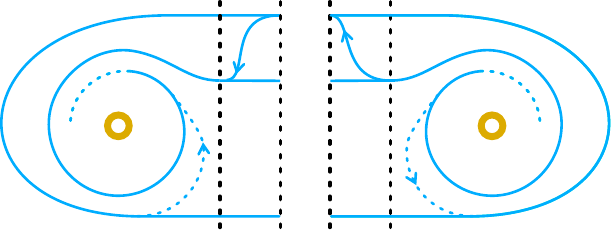}
 \captionof{figure}{Arrows running counterclockwise can be removed. }
 \label{fig:cancel}
  \end{minipage}%
  }
The reason this form is useful is that it allows us to remove arrows and simplify. This is possible in general, by appealing to an algorithm given in~\cite{HRW}, and ultimately gives rise to the proof of Theorem~\ref{thm:classification}; see~\cite[Section~5]{KWZ} for details. The main point is that arrows winding counterclockwise around a puncture can be removed. Namely, suppose there is an arrow near an edge of the strip $\strip$ that, when pushed into the relevant annulus, runs counterclockwise between curve-segments with different amounts of wrapping. Then there is a homotopy equivalence that produces a new train track---with the counterclockwise arrow removed---representing the same type~D structure; see Figure~\ref{fig:cancel}. This is described in detail in~\cite[Lemma 5.11]{KWZ}. The result of this procedure, applied to the example described in Figure~\ref{fig:run-exp-3}, is shown in Figure~\ref{fig:run-exp-4}.

\labellist 
\pinlabel $\overbrace{\phantom{aaaaaaaaaaaaa}}$ at 138 138
\pinlabel {$\protect{
\left(\begin{smallmatrix}1&0&0\\ 0&1&0\\ 0&\!\!-\!1&1 \end{smallmatrix}\right)\!\!
\left(\begin{smallmatrix}0&0&1\\ \!\!\!-\!1&0&0\\0&1&0 \end{smallmatrix}\right)\!\!
\left(\begin{smallmatrix}1&\!\!-\!1&0\\ 0&1&0\\0&0&1 \end{smallmatrix}\right)}
$} at 138 154
\tiny
\pinlabel $-\!1$ at 119 94 
\pinlabel $-\!1$ at 142  122
\pinlabel $+\!1$ at 142  102.5
\pinlabel $-\!1$ at 160 110 
\endlabellist
\begin{figure}[p]
	\centering
\vspace{1cm}
 \includegraphics[scale=1.1]{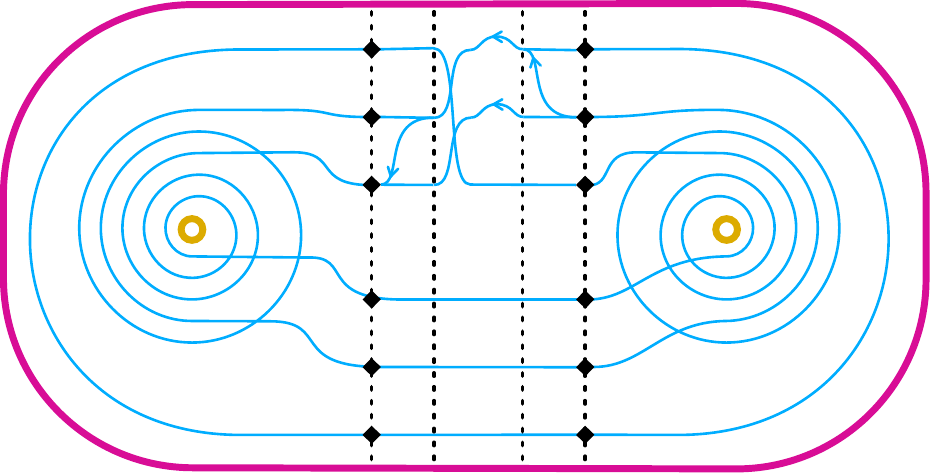}
 \caption{Modifying the train track from Figure~\ref{fig:run-exp-2} according to an $\mathbf{LPU}$ decomposition of the matrix. }\label{fig:run-exp-3}
\end{figure}

\labellist 
\pinlabel $(-1)$ at 180 18
\endlabellist
\begin{figure}[p]
	\centering
 \includegraphics[scale=1.1]{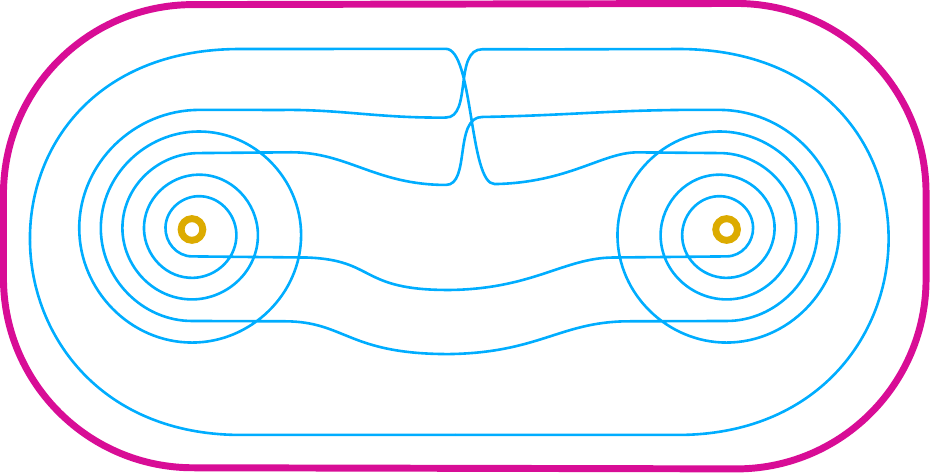}
 \caption{Modifying the train track from Figure~\ref{fig:run-exp-3} by removing the counter-clockwise arrows. This produces an immersed curve---an object that is equivalent to the train track from Figure~\ref{fig:run-exp-1}, and which carries a 1-dimensional local system with automorphism that multiplies by $-1$.
 }\label{fig:run-exp-4}
\end{figure}

\labellist \small
\pinlabel {$\big(\k^2,\protect{
\left(\begin{smallmatrix}1&0\\ \lambda&1 \end{smallmatrix}\right)}\big)$} at 237 60.5
\tiny
\pinlabel $\lambda$ at 58 56
\endlabellist
\begin{figure}[p]
 \includegraphics[scale=1.1]{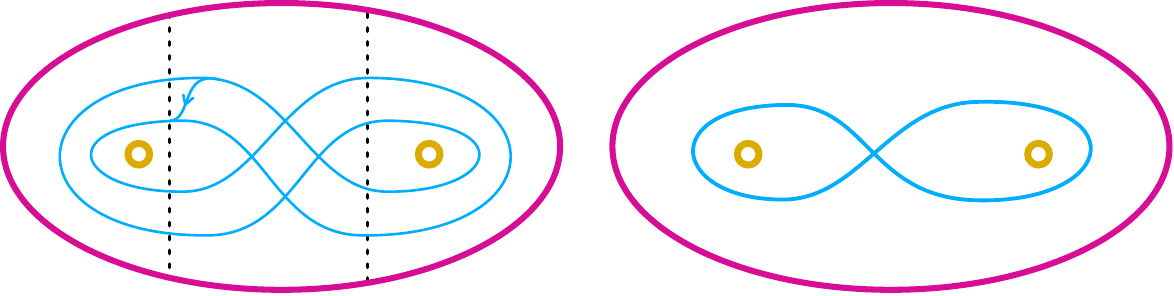}
 \caption{An arrow that cannot be cancelled gives rise to a non-trivial local system. }
 \label{fig:non-trivial-local}
\end{figure}

Recall that a local system over an immersed curve is a vector bundle over the curve. In general, all of our curves carry local systems, but when the associated bundle is one-dimensional and trivial we drop it from the notation. When working with signs, one-dimensional local systems are quite common as the coefficients along any given curve component multiply. Of course, non-compact curves do not carry interesting local systems since all vector bundles are trivial in this case. On the other hand, for compact curves it should be clear from the construction described above where a local system can arise: If two compact curves run parallel, then a crossover switch running between them  cannot be removed by a chain isomorphism of type~D structures. In general, local systems provide a clean way of presenting the relevant invariants, while the formalism expressing curves with local systems in terms of train tracks gives a concrete means of working with these objects. An example is shown in Figure~\ref{fig:non-trivial-local}; notice that, by replacing $\varphi$ with $\varphi^{-1}$ in this example, one can obtain a vertically simplified basis or a horizontally simplified basis, but not both simultaneously. It appears to still be an open question if such phenomena arises for invariants associated with knots; see~\cite[Remark 2.9]{Hom-thesis}. 

\labellist \small
\pinlabel $\rho_0$ at 27 10, \pinlabel $\rho_3$ at 149 10
\pinlabel $\rho_1$ at 27 110, \pinlabel $\rho_2$ at 149 110
\pinlabel $\mathbf{a}_\bullet$ at 95 25
\pinlabel $\mathbf{a}_\circ$ at 28 52
\endlabellist
\parpic[r]{
 \begin{minipage}{55mm}
 \vspace{60pt}
 \centering
 \captionsetup{type=figure}
 \includegraphics[scale=0.75]{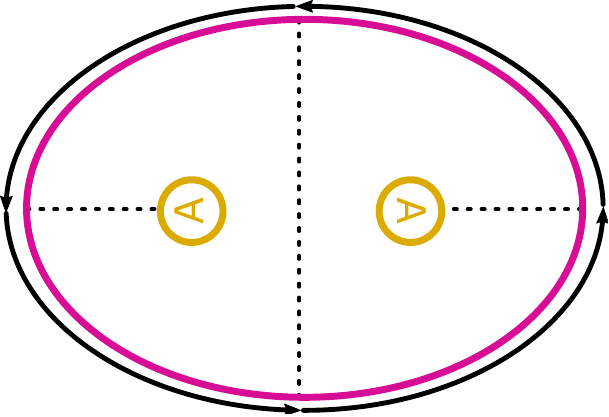}
 \captionof{figure}{An arc system for the extended algebra $\TsA$. The two discs are identified, producing a handle.}\label{fig:torus-with-quiver}
 \vspace{-40pt}
  \end{minipage}%
  }
\section{Adding a handle}
We now introduce the second algebra: the extended torus algebra $\TsA$. This algebra is introduced in~\cite{HRW}, and is also the algebra arising naturally in our setting.  
By construction, the map $\handle$ takes the twice-punctured disk to the once-punctured torus $T$.  An arc system for the latter is shown in Figure~\ref{fig:torus-with-quiver}, from which the associated quiver
\[
\setlength\mathsurround{0pt}
\begin{tikzcd}[column sep=2cm]
\bullet
\arrow[bend left=10]{r}[description]{\rho_{1}}
\arrow[bend right=30]{r}[description]{\rho_{3}}
&
\circ
\arrow[bend right=30]{l}[description]{\rho_{0}}
\arrow[bend left=10]{l}[description]{\rho_{2}}
\end{tikzcd}
\]
can be extracted---as before we contract the arcs to the quiver vertices.
Consulting Figure~\ref{fig:boundary}, note that $\mathbf{a}_\bullet$ is identified with the meridian $\mu$ and $\mathbf{a}_\circ$ is identified with the choice of longitude $\lambda$. With this arc system we associate an algebra $\TsA$. 
Analogous to the relation $uv=0$ from Figure~\ref{fig:disk-with-quiver}, the algebra $\TsA$ has relations 
$$\rho_{i+1}\rho_{i}=0$$  
(indices interpreted modulo 4), as explained in Section~\ref{sub:geo}. Note that the products $\rho_i\rho_{i+1}=\rho_{i(i+1)}$ are non-zero. For consistency with~\cite[Section~3.1]{HRW} we would need to add an additional relation $\rho_0\rho_1\rho_2\rho_3\rho_0=0$, but this is not necessary in the present setting.

The arc system associated with $\TsA$ decomposes the torus into a single disk, so type~D structures associated with compact train-tracks will be curved.  We fix the curvature term $c=\rho_{0123}+\rho_{1230}+\rho_{2301}+\rho_{3012}$. Recall that a curved type~D structure over $\TsA$ satisfies the compatibility condition 
\[(\mu \otimes \id_V)\circ ( \id_{\TsA} \otimes \, d ) \circ d = c\cdot\id_{\TsA}\]
and that, in this setting, the underlying $\k$-vector space decomposes so that $V=V_\bullet\oplus V_\circ$ as an $\sI$-module. 

The torus algebra is the quotient $\sA=\TsA/(\rho_0)$. Notice that in this quotient the curvature vanishes and the compatibility condition for type~D structures given in Section~\ref{sub:alg} is recovered. This algebra is explored in depth in~\cite[Section~11]{LOT-main} and in~\cite{HRW}. 


\section{The proof of Theorem~\ref{thm:LOT}}\label{sub:handle}
 
To set the stage, we first describe three general constructions. First, given a type~D structure ${}^{\k[u]}N$ over the polynomial ring $\k[u]$, there is a natural way to produce a dg module/chain complex over $\k[u]$: Substitute each generator $\gen$ in ${}^{\k[u]}N$ with a copy of the ring $\k[u]$, producing a free $\k[u]$-module, and then endow this module with a differential by substituting every arrow $\gen \xrightarrow{\ell u^n} \gen$  in ${}^{\k[u]}N$ with a map $\k[u] \xrightarrow{\cdot (\ell u^n)} \k[u]$ (where $\ell\in \k$). We denote the resulting dg module by $\k[u]\boxtimes {}^{\k[u]}N$, because it coincides with the result of box tensoring the type~D structure with the module $\k[u]$ viewed as a bimodule over itself~\cite[Section~2.3.2]{LOT-bim}. Note that this operation respects homotopy equivalences and also can be reversed~\cite[Proposition~2.3.18]{LOT-bim}, albeit in a less than straightforward way.

For the second construction, let $\k[v]$ be the graded polynomial ring in one variable with grading $a(v)=1$. (Below, $a$ will be the Alexander grading.)
Suppose ${}^{\k[v]}N=\bigoplus_{a\in\Z} {}^{\k[v]}N^a$ is a graded type~D structure over $\k[v]$ such that the differential preserves the grading $a$. 
We can then  produce a complex ${}^{\k[v]}N\big|_{v=1}$ by substituting arrows $\gen \xrightarrow{\ell v^n} \gen$ in ${}^{\k[v]}N$ by arrows $\gen \xrightarrow{\ell} \gen$. Clearly, this amounts to passing to the quotient $\k=\k[v]/(v-1)$. 
However, since $a(v)=1$, all the differentials in  ${}^{\k[v]}N$  that involved $v^n$, for $n\neq 0$, now change the grading in ${}^{\k[v]}N\big|_{v=1}$ by $n$. 
Thus, we can consider ${}^{\k[v]}N\big|_{v=1}$ as a filtered chain complex where the filtration levels are $\mathcal F_j=\oplus_{a\leq j} N^a$. As a category, type~D structures over $\k[v]$ are equivalent to filtered chain complexes via the construction above. In particular, type~D structure homomorphisms and homotopies between them precisely correspond to filtered chain maps and filtered homotopies between them.

The third construction is similar to the second. Given, a graded type~D structure  ${}^{\k[v]}N$ over $\k[v]$ whose differential preserves the grading $a$, we define a complex ${}^{\k[v]}N\big|_{v=0}$ by removing all arrows $\gen \xrightarrow{\ell v^n} \gen$, $n>0$, in ${}^{\k[v]}N$. This amounts to passing to the quotient $\k=\k[v]/(v)$, or equivalently, to passing to the associated graded complex of the filtered complex ${}^{\k[v]}N\big|_{v=1}$. 

We can now provide a dictionary between the knot Floer structures used here and those in~\cite{LOT-main}. In this paper, the most general knot Floer invariant is the type~D structure ${}^{\k[u,v]}\mathit{CFK}(S^3,K)$.  In~\cite{LOT-main}, two kinds of invariants appear. The first is the filtered chain complex $\mathit{CFK}^-(S^3,K)$ over $\k[u]$, which is a dg module over $\k[u]$ filtered with respect to the Alexander grading.
It is obtained from ${}^{\k[u,v]}\mathit{CFK}(S^3,K)$ by applying the first construction to the variable $u$ and the second construction to the variable $v$:
$$\mathit{CFK}^-(S^3,K)= \k[u]\boxtimes{}^{\k[u]}\left({}^{\k[u,v]}\mathit{CFK}(S^3,K)\big|_{v=1}\right)$$
The second invariant used in~\cite{LOT-main} is $\mathit{gCFK}^-(S^3,K)$, the associated graded complex of $\mathit{CFK}^-(S^3,K)$. 
It is obtained from ${}^{\k[u,v]}\mathit{CFK}(S^3,K)$ by applying the first construction to the variable $u$ and the third construction to the variable $v$:
$$ \mathit{gCFK}^-(S^3,K)= \k[u]\boxtimes{}^{\k[u]}\left({}^{\k[u,v]}\mathit{CFK}(S^3,K)\big|_{v=0}\right) $$

\begin{example}
Consider the right-hand trefoil and its knot Floer invariants. The type~D structure invariant is as follows:
$${}^{\k[u,v]}\mathit{CFK}(S^3, \RHT)=[\gen^1_1 \xleftarrow{u} \gen^0_1 \xrightarrow{v}  \gen^{-1}_1] $$
where the superscripts and subscripts indicate the Alexander and $\delta$ gradings respectively. Recall that the Alexander and $\delta$ gradings are $\text{gr}(u)=(-1,1),~\text{gr}(v)=(1,1)$, so that the differential in the type~D structure is of bidegree $(a,\delta)=(0,1)$. The filtered chain complex over $\k[u]$ now becomes
$$\mathit{CFK}^-(S^3,K)= \k[u]\boxtimes{}^{\k[u]}\left({}^{\k[u,v]}\mathit{CFK}(S^3,K)\big|_{v=1}\right)=[\k[u]^1_1 \xleftarrow{\cdot u}\k[u]^0_1  \xrightarrow{1} \k[u]^{-1}_1]$$
while the associated graded chain complex over $\k[u]$ is equal to 
$$\mathit{gCFK}^-(S^3,K)= \k[u]\boxtimes{}^{\k[u]}\left({}^{\k[u,v]}\mathit{CFK}(S^3,K)\big|_{v=0}\right)=[\k[u]^1_1 \xleftarrow{\cdot u}\k[u]^0_1] \oplus [\k[u]^{-1}_1]$$
\end{example}


We now proceed to the proof. We start with the knot Floer type~D structure ${}^\uvAlg\mathit{CFK}(S^3,K)={}^{\F[u,v]}\mathit{CFK}(S^3,K)\big|_{vu=0}$, and then homotope it to a representative (following the steps from Section~\ref{sub:geo}) from which the curve invariant $\bgamma$ can be extracted. With the dictionary above in mind, \cite[Theorem~A.11]{LOT-main} describes in detail how to pass from ${}^\uvAlg\mathit{CFK}(S^3,K)$ to the type~D structure  ${}^\sA\widehat{\mathit{CFD}}(M)$, which then produces a curve $\HFhat(M)$ in the punctured torus $\partial M$. Our task is to prove that the resulting curve coincides with  $\handle(\bgamma)$. 

We focus on segments of the curve $\bgamma$ in each of the annuli $A_v$ and $A_u$, and consider their images under the map $\handle$. Starting with an illustrative example, the image of a curve segment corresponding  to the arrow $\gen\xrightarrow{v^3}\gen$ is drawn in thick in Figure~\ref{fig:extra}, relative to the arc system of the algebra $\TsA$.
 \begin{figure}[ht]
 \begin{subfigure}[b]{0.48\textwidth}
 \centering
  \labellist \small
\pinlabel $\rho_0$ at 27 10, \pinlabel $\rho_3$ at 149 10
\pinlabel $\rho_1$ at 27 105, \pinlabel $\rho_2$ at 149 105
\endlabellist
 \includegraphics[scale=1]{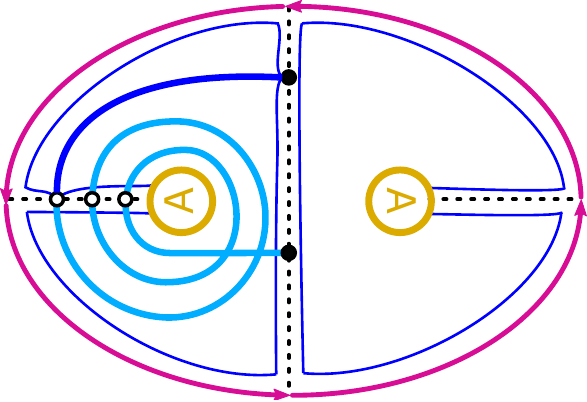}  
 \caption{Sample part of the curve corresponding to a stable chain from \cite[Theorem~A.11]{LOT-main}.}
 \label{fig:extra}
 \end{subfigure}
 \begin{subfigure}[b]{0.48\textwidth}
 \centering
 \labellist \small
\pinlabel $\rho_0$ at 27 10, \pinlabel $\rho_3$ at 149 10
\pinlabel $\rho_1$ at 27 105, \pinlabel $\rho_2$ at 149 105
\endlabellist
 \includegraphics[scale=1]{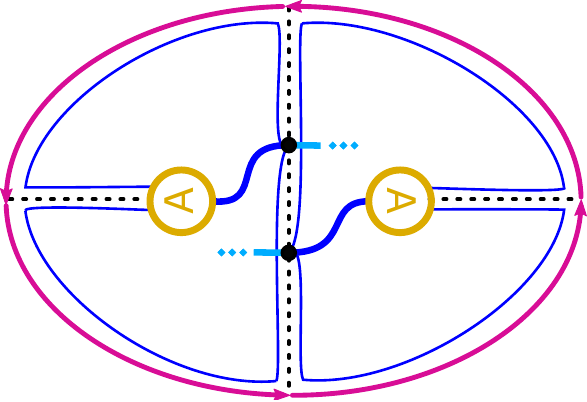}  
 \caption{Part of the curve corresponding to the unstable chain from \cite[Theorem~A.11]{LOT-main}.}
 \label{fig:extra2}
  \end{subfigure}
 \caption{}
  \label{fig:extra12}
  \end{figure}
Focusing on the first part of this segment, shown are the two ways in can be retracted to the boundary of the torus union the two arcs: In one case the homotoped path runs along $\rho_1$, and in the other case it runs along $\rho_2$ then   $\rho_3$  then  $\rho_0$. In the type~D structure language then, according to \cite{HRW} and the discussion in Section~\ref{sub:geo}, this part of the curve results in $
\setlength\mathsurround{0pt}
\begin{tikzcd}[column sep=1cm]
\bullet
\arrow[bend left=15]{r}[description]{\rho_{1}}
&
\circ
\arrow[bend left=15]{l}[description]{\rho_{230}}
\end{tikzcd}$. Similarly, the whole thick curve segment depicted in Figure~\ref{fig:extra} corresponds to the following part of a type~D structure over $\TsA$: 
\[
\begin{tikzpicture}[>=latex, scale=0.9] 
    \node at (0,0) {$\bullet$}; 
    \node at (2,0) {$\circ$};
    \node at (4,0) {$\circ$};
    \node at (6,0) {$\circ$};
    \node at (8,0) {$\bullet$};
\draw[thick,->,shorten >=0.1cm, shorten <=0.1cm,] (0,0) to [out=20,in=160, looseness=1] (2,0); \node at (1,0.4) {\scriptsize${\rho_{1}}$};
\draw[thick,<-,shorten >=0.1cm, shorten <=0.1cm,] (0,0) to [out=-20,in=-160, looseness=1] (2,0); \node at (1,-0.4) {\scriptsize${\rho_{230}}$};
\draw[thick,->,shorten >=0.1cm, shorten <=0.1cm,] (2,0) to [out=20,in=160, looseness=1] (4,0); \node at (3,0.4) {\scriptsize${\rho_{01}}$};
\draw[thick,<-,shorten >=0.1cm, shorten <=0.1cm,] (2,0) to [out=-20,in=-160, looseness=1] (4,0); \node at (3,-0.4) {\scriptsize${\rho_{23}}$};
\draw[thick,->,shorten >=0.1cm, shorten <=0.1cm,] (4,0) to [out=20,in=160, looseness=1] (6,0); \node at (5,0.4) {\scriptsize${\rho_{01}}$};
\draw[thick,<-,shorten >=0.1cm, shorten <=0.1cm,] (4,0) to [out=-20,in=-160, looseness=1] (6,0); \node at (5,-0.4) {\scriptsize${\rho_{23}}$};
 \draw[thick,->,shorten >=0.1cm, shorten <=0.1cm,] (6,0) to [out=20,in=160, looseness=1] (8,0); \node at (7,0.4) {\scriptsize${\rho_{0}}$};
 \draw[thick,<-,shorten >=0.1cm, shorten <=0.1cm,] (6,0) to [out=-20,in=-160, looseness=1] (8,0); \node at (7,-0.4) {\scriptsize${\rho_{123}}$};
  \end{tikzpicture}
  \]
More generally, the image of a curve segment corresponding  to the arrow $\gen\xrightarrow{v^i}\gen$ is 
\[
\begin{tikzpicture}[>=latex, scale=0.9] 
		\node at (0,0) {$\bullet$}; 
		\node at (2,0) {$\circ$};
		\node at (4,0) {$\circ$};
		\node at (6,0) {$\circ$};
		\node at (8,0) {$\circ$};
		\node at (10,0) {$\bullet$};
\draw[thick,->,shorten >=0.1cm, shorten <=0.1cm,] (0,0) to [out=20,in=160, looseness=1] (2,0); \node at (1,0.4) {\scriptsize${\rho_{1}}$};
\draw[thick,<-,shorten >=0.1cm, shorten <=0.1cm,] (0,0) to [out=-20,in=-160, looseness=1] (2,0); \node at (1,-0.4) {\scriptsize${\rho_{230}}$};
\draw[thick,->,shorten >=0.1cm, shorten <=0.1cm,] (2,0) to [out=20,in=160, looseness=1] (4,0); \node at (3,0.4) {\scriptsize${\rho_{01}}$};
\draw[thick,<-,shorten >=0.1cm, shorten <=0.1cm,] (2,0) to [out=-20,in=-160, looseness=1] (4,0); \node at (3,-0.4) {\scriptsize${\rho_{23}}$};
\draw[thick,->,shorten >=0.1cm, shorten <=0.1cm,] (4,0) to [out=20,in=160, looseness=1] (6,0); \node at (5,0.4) {\scriptsize${\rho_{01}}$};
\draw[thick,<-,shorten >=0.1cm, shorten <=0.1cm,] (4,0) to [out=-20,in=-160, looseness=1] (6,0); \node at (5,-0.4) {\scriptsize${\rho_{23}}$};
 \node at (7,0) {\scriptsize${\cdots}$};
 \draw[thick,->,shorten >=0.1cm, shorten <=0.1cm,] (8,0) to [out=20,in=160, looseness=1] (10,0); \node at (9,0.4) {\scriptsize${\rho_{0}}$};
 \draw[thick,<-,shorten >=0.1cm, shorten <=0.1cm,] (8,0) to [out=-20,in=-160, looseness=1] (10,0); \node at (9,-0.4) {\scriptsize${\rho_{123}}$};
\node at (5,0.60) {$\overbrace{\phantom{aaaaaaaaaaaaaaaaaaaaaaaaaaaaaaaaa}}$};
\node at (5,1)  {$\dim V_\circ=i$} ;
	\end{tikzpicture}
	\]
Analogously, the image of a curve segment corresponding to the arrow $\gen\xrightarrow{u^i}\gen$ is 
\[
\begin{tikzpicture}[>=latex, scale=0.9] 
		\node at (0,0) {$\bullet$}; 
		\node at (2,0) {$\circ$};
		\node at (4,0) {$\circ$};
		\node at (6,0) {$\circ$};
		\node at (8,0) {$\circ$};
		\node at (10,0) {$\bullet$};
\draw[thick,->,shorten >=0.1cm, shorten <=0.1cm,] (0,0) to [out=20,in=160, looseness=1] (2,0); \node at (1,0.4) {\scriptsize${\rho_{3}}$};
\draw[thick,<-,shorten >=0.1cm, shorten <=0.1cm,] (0,0) to [out=-20,in=-160, looseness=1] (2,0); \node at (1,-0.4) {\scriptsize${\rho_{012}}$};
\draw[thick,->,shorten >=0.1cm, shorten <=0.1cm,] (2,0) to [out=20,in=160, looseness=1] (4,0); \node at (3,0.4) {\scriptsize${\rho_{23}}$};
\draw[thick,<-,shorten >=0.1cm, shorten <=0.1cm,] (2,0) to [out=-20,in=-160, looseness=1] (4,0); \node at (3,-0.4) {\scriptsize${\rho_{01}}$};
\draw[thick,->,shorten >=0.1cm, shorten <=0.1cm,] (4,0) to [out=20,in=160, looseness=1] (6,0); \node at (5,0.4) {\scriptsize${\rho_{23}}$};
\draw[thick,<-,shorten >=0.1cm, shorten <=0.1cm,] (4,0) to [out=-20,in=-160, looseness=1] (6,0); \node at (5,-0.4) {\scriptsize${\rho_{01}}$};
 \node at (7,0) {\scriptsize${\cdots}$};
 \draw[thick,->,shorten >=0.1cm, shorten <=0.1cm,] (8,0) to [out=20,in=160, looseness=1] (10,0); \node at (9,0.4) {\scriptsize${\rho_{2}}$};
 \draw[thick,<-,shorten >=0.1cm, shorten <=0.1cm,] (8,0) to [out=-20,in=-160, looseness=1] (10,0); \node at (9,-0.4) {\scriptsize${\rho_{301}}$};
\node at (5,0.60) {$\overbrace{\phantom{aaaaaaaaaaaaaaaaaaaaaaaaaaaaaaaaa}}$};
\node at (5,1)  {$\dim V_\circ=i$} ;
	\end{tikzpicture}
	\]
Passing to the quotient algebra $\Alg$ by setting $\rho_0=0$ simplifies the above two images to 
\[
\begin{tikzpicture}[>=latex, scale=0.9] 
		\node at (0,0) {$\bullet$}; 
		\node at (2,0) {$\circ$};
		\node at (4,0) {$\circ$};
		\node at (6,0) {$\circ$};
		\node at (8,0) {$\circ$};
		\node at (10,0) {$\bullet$};
\draw[thick,->,shorten >=0.1cm, shorten <=0.1cm,] (0,0) to (2,0); \node at (1,-0.2) {\scriptsize${\rho_{1}}$};
\draw[thick,<-,shorten >=0.1cm, shorten <=0.1cm,] (2,0) to (4,0); \node at (3,-0.2) {\scriptsize${\rho_{23}}$};
\draw[thick,<-,shorten >=0.1cm, shorten <=0.1cm,] (4,0) to  (6,0); \node at (5,-0.2) {\scriptsize${\rho_{23}}$};
 \node at (7,0) {\scriptsize${\cdots}$};
 \draw[thick,<-,shorten >=0.1cm, shorten <=0.1cm,] (8,0) to  (10,0); \node at (9,-0.2) {\scriptsize${\rho_{123}}$};
\node at (5,0.25) {$\overbrace{\phantom{aaaaaaaaaaaaaaaaaaaaaaaaaaaaaaaaa}}$};
\node at (5,0.65)  {$\dim V_\circ=i$} ;
	\end{tikzpicture}
	\]
and
\[
\begin{tikzpicture}[>=latex, scale=0.9] 
		\node at (0,0) {$\bullet$}; 
		\node at (2,0) {$\circ$};
		\node at (4,0) {$\circ$};
		\node at (6,0) {$\circ$};
		\node at (8,0) {$\circ$};
		\node at (10,0) {$\bullet$};
\draw[thick,->,shorten >=0.1cm, shorten <=0.1cm,] (0,0) to (2,0); \node at (1,-0.2) {\scriptsize${\rho_{3}}$};
\draw[thick,->,shorten >=0.1cm, shorten <=0.1cm,] (2,0) to (4,0); \node at (3,-0.2) {\scriptsize${\rho_{23}}$};
\draw[thick,->,shorten >=0.1cm, shorten <=0.1cm,] (4,0) to  (6,0); \node at (5,-0.2) {\scriptsize${\rho_{23}}$};
 \node at (7,0) {\scriptsize${\cdots}$};
 \draw[thick,->,shorten >=0.1cm, shorten <=0.1cm,] (8,0) to  (10,0); \node at (9,-0.2) {\scriptsize${\rho_{2}}$};
\node at (5,0.25) {$\overbrace{\phantom{aaaaaaaaaaaaaaaaaaaaaaaaaaaaaaaaa}}$};
\node at (5,0.65)  {$\dim V_\circ=i$} ;
	\end{tikzpicture}
	\]
These are precisely the two {\em stable} chains appearing in the statement of~\cite[Theorem~A.11]{LOT-main}: According to their result, these are the parts of ${}^\sA\widehat{\mathit{CFD}}(M)$ that correspond to the differentials $\gen\xrightarrow{v^i}\gen$ and $\gen\xrightarrow{u^i}\gen$ in ${}^\uvAlg\mathit{CFK}(S^3,K)$.

\labellist \tiny
\pinlabel $w$ at 26 76
\pinlabel $z$ at 27 65
\small
\pinlabel $\rho_1$ at 51 110
\pinlabel $\rho_0$ at 59 84
\pinlabel $\rho_3$ at 59 40
\pinlabel $\rho_2$ at 51 4
\pinlabel $v$ at 86 83
\pinlabel $u$ at 86 30
\pinlabel $\mu$ at -4.5 58 \pinlabel $\lambda$ at 181 58
\endlabellist
\parpic[r]{
 \begin{minipage}{67mm}
 \centering
 \captionsetup{type=figure}
 \includegraphics[scale=0.9]{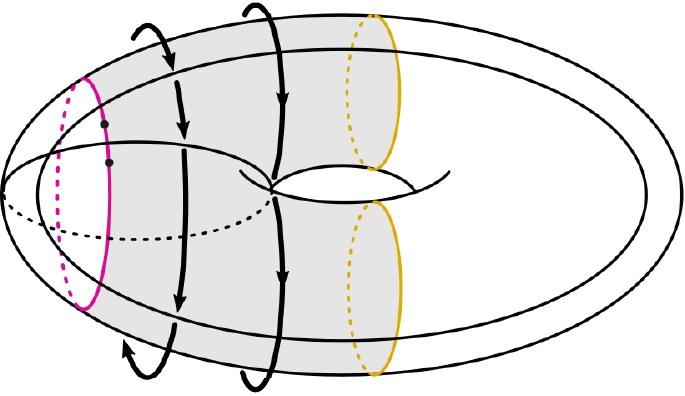}
 \captionof{figure}{Both algebras $\uvAlg$ and $\TsA$ in context. 
 }
 \label{fig:boundary}
  \end{minipage}%
  }
The main subtlety is the appearance of the {\em unstable} chain, which we have already touched on. Defining $\handle$ in such a way that there is no extra twisting introduced (see the left of Figure~\ref{fig:framings}), the straight segment running over the handle in Figure~\ref{fig:extra2} retracts in two ways shown, producing the final part of the type~D structure:
$
\setlength\mathsurround{0pt}
\begin{tikzcd}[column sep=1cm]
\bullet
\arrow[bend left=15]{r}[description]{\rho_{12}}
&
\bullet
\arrow[bend left=15]{l}[description]{\rho_{30}}
\end{tikzcd}$. Setting $\rho_0=0$ results in $\bullet \xrightarrow{\rho_{12}}\bullet$, which is precisely the unstable chain from~\cite[Theorem~A.11]{LOT-main}: According to their result, this is the final piece (in addition to the stable chains) in ${}^\sA\widehat{\mathit{CFD}}(M)$ (computed relative to the parameterization $(\mu,2\tau)$ of the torus $T^2=\partial M$). In~\cite[Theorem~A.11]{LOT-main}, this final piece connects the distinguished generators $\xi_0$ and $\eta_0$ in the vertically and horizontally simplified bases of $\mathit{CFK}^-(K)$. It is left to note that the two generators in Figure~\ref{fig:extra2} are precisely $\xi_0$ and $\eta_0$, because each is incident to only one arrow $\xrightarrow{v^i}$ or $\xrightarrow{u^j}$ in the complex ${}^\uvAlg\mathit{CFK}(S^3,K)$. We also remark that, while the unstable chain $\bullet \xrightarrow{\rho_{12}}\bullet$ corresponds to the $2\tau$-framing of the knot $K$, there are other type~D structure presentations of the unstable chain in \cite[Theorem~A.11]{LOT-main}, and those would correspond to other choices of twisting in $\handle$.


The statement about the reverse operation follows from the discussion above. Namely, its clear that $\cut(\handle( \bgamma))=\bgamma$, and since we proved $\handle( \bgamma)=\HFhat(M)$,  we obtain $\cut(\HFhat(M))=\bgamma$.

\section{Comments on generalizations and related work}\label{sub:discussion}  
Perhaps the most interesting step in this constructive review of the Lipshitz--Ozsv\'ath--Thurston correspondence comes about when the endpoints of the non-compact component $\bgamma_0 \subset \bgamma$ are identified to give a new compact component in the once-punctured torus. Note that the output of $\handle$ is always a compact curve, and this is consistent with the observation that $\HFhat(M)$ is a compact curve. The latter, in turn, follows from the fact that $\CFD(M)$ is an extendable type~D structure~\cite[Appendix~A]{HRW}.

Joining the endpoints of the immersed curve $\bgamma_0$ associated with a knot $K$ requires a choice of automorphism of $\k^n$ where $n$  is the number of components in $\bgamma_0$. Denote the horizontal homology
$H^\mathbf{h} = H_*(C^{\mathbf{h}}\big|_{u=1})$ and the vertical homology $H^\mathbf{v} = H_*(C^{\mathbf{v}}\big|_{v=1}) $. Then in
Theorem~\ref{thm:LOT}, because the knot is in $S^3$, it follows that $n=1$ and the automorphism is given, tautologically, by \begin{equation*} H^\mathbf{v}(\mathit{CFK}^-(S^3,K))\cong H^\mathbf{h}(\mathit{CFK}^-(S^3,K))\cong\widehat{\mathit{HF}}(S^3)\cong\k\end{equation*} as explained in~\cite[Section~11.5]{LOT-main}. 
Thus, the operation $\handle$ is defined over any field, provided that we choose a coefficient $a\in \k$ when we identify the ends of $\bgamma_0$ along a handle. We choose this coefficient to be $+1$ so that the bordered invariant for the solid torus is a circle with the trivial local system. We note that bordered Floer homology is only defined over the two-element field $\mathbb{F}$. As such, the map $\handle$ and Theorem~\ref{thm:LOT} gives a {\em candidate} bordered invariant for the knot exterior when $\k\ne\mathbb{F}$. 

We now consider the general case of a knot $K$ in $Y$. Decomposing along spin$^c$-structures, the same strategy as above works if $Y$ is an L-space~\cite{HeddenLevine2016}.  More generally, however, one needs to know the isomorphism  \begin{equation*}H^\mathbf{v}(\mathit{CFK}^-(Y,K))\cong H^\mathbf{h}(\mathit{CFK}^-(Y,K))\cong\widehat{\mathit{HF}}(Y)\end{equation*} 
\labellist \small
\pinlabel $(\mathbf{k}^{n},\psi)$ at 68 50
\endlabellist
\parpic[r]{
 \begin{minipage}{68mm}
 \centering\vspace{15pt}
 \captionsetup{type=figure}
 \includegraphics[width=\textwidth]{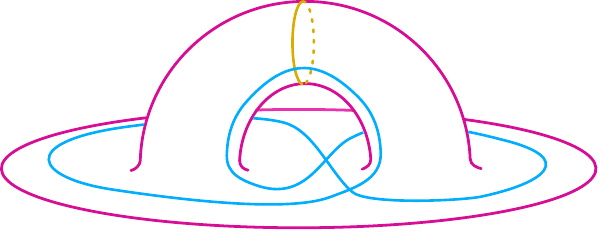}
 \captionof{figure}{A sample hypothetical local system, where $n=\dim\HFhat(Y)$. \\\hspace{\textwidth}  }\label{fig:loc-hyp}
  \end{minipage}%
  }
(which may be block-decomposed according to spin$^c$-structures). 
This recovers a generalization of~\cite[Theorem~A.11]{LOT-main}, which may be found in forthcoming work of Hockenhull~\cite{Hockenhull-prep} building on his invariant $\operatorname{Poly}(L,\Lambda)$~\cite{Hockenhull}.  From our perspective, the passage from the knot Floer homology of a knot $K$ in $Y$ to the bordered invariants of $Y\smallsetminus\openU(K)$ requires the isomorphism shown above. As there is a decomposition according to spin$^c$ structures, there is no loss of generality in considering the case where $Y$ is an integer homology sphere. When such a $Y$ is not an L-space, we have that $\dim\HFhat(Y)>1$ and, in principle, the automorphism $\psi$ induced by the isomorphism between the homologies $H^{\mathbf h}$ and $H^{\mathbf v}$ can be interesting. In particular, while all components of $\bgamma_0(K)$ carry trivial local systems, the new compact object $\handle(\bgamma_0(K))$ obtains an additional local system $(\mathbf{k}^{n},\psi)$; see Figure~\ref{fig:loc-hyp}. The key point of difference is that the output will be equivalent to a simply faced precurve (in the torus) in general, and a further application of the arrow sliding algorithm may be required to obtain immersed curves. The algebraic side of this story is laid out carefully by Hockenhull~\cite{Hockenhull-prep,Hockenhull}. 

Finally, Hanselman gives another approach~\cite{hanselman}: His construction takes the complex $\mathit{CFK}^-(K)$ and outputs an immersed curve in the strip covering the twice-punctured disk $D$, containing a countable set of pairs of punctures. This cover of the disk is useful for recording the Alexander grading, and also works with general fields (hence producing candidate bordered invariants). We advertise that Hanselman's construction has a different aim in mind, namely, a candidate bordered-minus invariant obtained by promoting the curves to describe type~D structures over $\k[u,v]$. 

\section{The Proof of Theorem~\ref{thm:connected_sum}}\label{sub:sum}
For simplicity we first focus on the case of two-element field $\k=\F$. A few properties of invariant $^{\uvAlg}\mathit{CFK}(S^3,K)$ are needed for the proof. First, given two type~D structures over the polynomial algebra $\k[u,v]$ or its quotient $\uvAlg$, their tensor product is another type~D structure:
\[
(V,d) \otimes (V',d') = (V\otimes_{\k} V',  d\otimes \id + \id \otimes d')
\] 
Now, reformulating~\cite[Theorem~7.1]{OS-hfk-0}, the behaviour of knot Floer homology under taking the connected sum can be described as follows:
$${}^{\uvAlg}\mathit{CFK}(S^3,K\# K') \simeq {}^{\uvAlg}({}^{\uvAlg}\mathit{CFK}(S^3,K)\otimes {}^{\uvAlg}\mathit{CFK}(S^3,K'))$$
The mirroring operation is also well understood~\cite[Proposition~3.7]{OS-hfk-0}: 
$${}^{\uvAlg}\mathit{CFK}(S^3,\text{m}  K) \simeq {}^{\uvAlg}\overline{\mathit{CFK}(S^3, K)}$$
where the latter is the \emph{dual type~D structure}, equal to the original one but with all differentials reversed~\cite[Definition~2.5]{LOT-mor} (since $\uvAlg$ is commutative, the fact that dualizing turns left type~D structure to right ones is not a problem). Finally, we need an algebraic relationship between morphism spaces of type~D structures~\cite[Section~2.2.3]{LOT-bim} and their tensor products. Given any two type~D structures, the definitions imply the following isomorphism of chain complexes:
$$  {\uvAlg} \boxtimes {}^{\uvAlg}({}^{\uvAlg}\overline{N}\otimes {}^{\uvAlg}{N'}) \cong \text{Mor}({}^{\uvAlg}{N}, {}^{\uvAlg}{N'})$$

With the properties above in place, the proof of Theorem~\ref{thm:connected_sum} is a sequence of isomorphisms:
\begin{align*} {}_{\uvAlg}\mathit{HFK}(S^3, \text{m} K \# K') 
&\cong  H_*(\uvAlg\boxtimes {}^{\uvAlg}\mathit{CFK}(S^3, \text{m} K \# K') ) \\
&\cong H_*(\uvAlg\boxtimes {}^{\uvAlg} [{}^{\uvAlg}\mathit{CFK}(S^3, \text{m} K) \otimes {}^{\uvAlg}\mathit{CFK}(S^3, K')] )  \\
&\cong H_*(\uvAlg\boxtimes {}^{\uvAlg} [{}^{\uvAlg} \overline{\mathit{CFK}(S^3, K)} \otimes {}^{\uvAlg} \mathit{CFK}(S^3, K') ] )  \\
&\cong H_*( \text{Mor} ({}^{\uvAlg} \mathit{CFK}(S^3, K), {}^{\uvAlg} \mathit{CFK}(S^3, K') ) )  \\
&\cong \mathit{HF} (\bgamma(K), \bgamma(K')) 
\end{align*} 
where the final isomorphism follows from the general description of morphism spaces between type~D structures over surface algebras~\cite[Theorem~1.5]{KWZ}.

 \begin{figure}[t]
 	\centering
\labellist \small
\pinlabel $\bgamma(U)$ at 173 89
\pinlabel $\bgamma(T_{2,3})$ at 33 27
\endlabellist
 \includegraphics[scale=0.9]{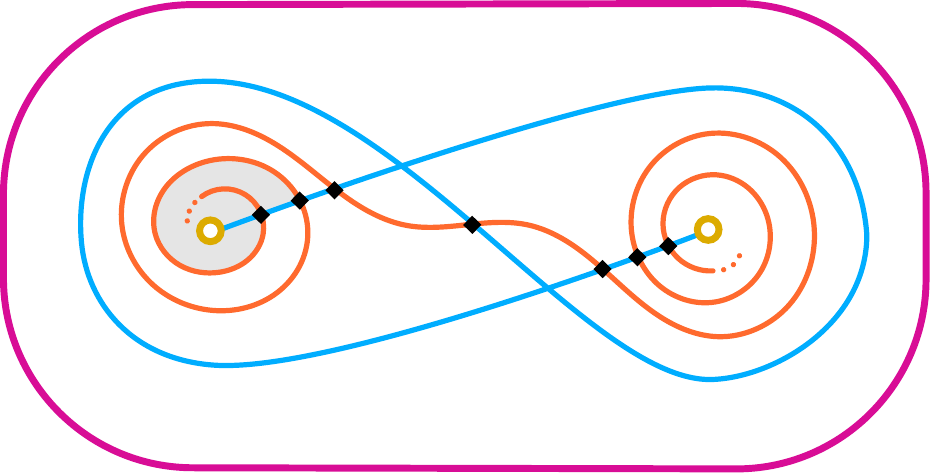}
 \caption{Illustrating Theorem~\ref{thm:connected_sum} in the case of the unknot $K=U$ and the right-hand trefoil $K'=T_{2,3}$. The curve $\bgamma(U)$ is a horizontal arc connecting the punctures, but because we are in the wrapped setting, one needs to wrap $\bgamma(U)$ infinitely many times around the punctures when pairing with another curve.
 }\label{fig:pairing}
\end{figure}

The recipe for adding signs follows the Koszul sign rule, which is discussed in~\cite[Section~12]{OS-bordered-matchings} in detail. We find that the resulting signs are a bit more more natural if one considers right type~D structures~\cite[Example~2.10]{KWZ}, rather than left ones~\cite[Section~12.3]{OS-bordered-matchings}, as then there are no extra signs when box tensoring with $-\boxtimes {}_\uvAlg\uvAlg_{\uvAlg}$; this is explained in~\cite[Page~19]{KWZ}. Now, since the algebra $\uvAlg$ is commutative, our left type~D structures can be viewed as right type~D structures, and after that filling in the signs becomes straightforward. We refer the reader to~\cite[Sections~2 and~5]{KWZ}.
\qed

To illustrate this gluing result, suppose $K=U$ and $K'=T_{2,3}$.  Then the knot Floer homology of the connected sum is equal to 
\begin{align*}
{}_{\uvAlg}\mathit{HFK}(S^3,T_{2,3})
&=
H_*(\uvAlg \xleftarrow{\cdot v} \uvAlg \xrightarrow{\cdot u} \uvAlg)
\\
&=
[\cdots \xleftarrow{v}\gen \xleftarrow{v}\gen \xleftarrow{v}\gen \xrightarrow{u}\gen \xleftarrow{v} \gen  \xrightarrow{u}\gen \xrightarrow{u} \gen \xrightarrow{u} \cdots]
\end{align*}

where the arrows indicate the  $\uvAlg$-action. The corresponding wrapped Lagrangian Floer homology $ \mathit{HF} (\bgamma(U), \bgamma(T_{2,3}))$ is illustrated in Figure~\ref{fig:pairing}. Note that in this example the $\uvAlg$-action can be seen geometrically by counting Maslov index $2$ disks covering the punctures; one of these is shaded in the picture. The same is true for $\k[H]$-action on Bar-Natan homology, viewed as wrapped Lagrangian Floer homology of immersed curves in~\cite[Example~7.7]{KWZ}. In general, to recover these module-structures, only some of the Maslov index $2$ disks should be counted---we will investigate this in future work.

\section*{Acknowledgements} 
This short paper benefited from stimulating conversations with Jonathan Hanselman, Thomas Hockenhull, and Matthew Stoffregen. The work was initiated during the CRM50 thematic program {\it low dimensional 
topology} and we thank the CRM and CIRGET for hosting a great event. 

AK is supported by an AMS-Simons travel grant. 
LW is supported by an NSERC discovery/accelerator grant and was partially supported by funding from the Simons Foundation and the Centre de Recherches Math\'ematiques, through the Simons-CRM scholar-in-residence program.
CZ is supported by the Emmy Noether Programme of the DFG, Project number 412851057, and the SFB 1085 Higher Invariants in Regensburg.

\bibliographystyle{gtart}
\bibliography{references}

\begin{thebibliography}{}
\providecommand\bibmarginpar{\leavevmode\marginpar}
\def\urlstyle#1{{\tt #1}}

\bibitem{DHST}
\textbf{I Dai}, \textbf{J Hom}, \textbf{M Stoffregen}, \textbf{L Truong},
  \href{http://dx.doi.org/10.2140/gt.2021.25.275} {\emph{More concordance
  homomorphisms from knot Floer homology}}, Geom. Topol. 25 (2021) 275--338
  \xox{arXiv}{1902.03333}

\bibitem{HKK}
\textbf{F Haiden}, \textbf{L Katzarkov}, \textbf{M Kontsevich},
  \href{http://dx.doi.org/10.1007/s10240-017-0095-y} {\emph{Flat surfaces and
  stability structures}}, Publ. Math. Inst. Hautes \'{E}tudes Sci. 126 (2017)
  247--318 \xox{arXiv}{1409.8611}

\bibitem{hanselman}
\textbf{J Hanselman}, \emph{Knot {F}loer homology via immersed curves}~In
  preparation

\bibitem{HRW}
\textbf{J Hanselman}, \textbf{J Rasmussen}, \textbf{L Watson}, \emph{Bordered
  {F}loer homology for manifolds with torus boundary via immersed curves}
  (2016) \xox{arXiv}{1604.03466}

\bibitem{HRW-prop}
\textbf{J Hanselman}, \textbf{J Rasmussen}, \textbf{L Watson}, \emph{Heegaard
  {F}loer homology for manifolds with torus boundary: properties and examples}
  (2018) \xox{arXiv}{1810.10355}

\bibitem{HW}
\textbf{J Hanselman}, \textbf{L Watson}, \emph{Cabling in terms of immersed
  curves} (2019) \xox{arXiv}{1908.04397}~To appear in Geom. Topol.

\bibitem{HHK1}
\textbf{M Hedden}, \textbf{C\,M Herald}, \textbf{P Kirk}, \emph{The pillowcase
  and perturbations of traceless representations of knot groups}, Geom. Topol.
  18 (2014) 211--287 \xox{arXiv}{1301.0164}

\bibitem{HeddenLevine2016}
\textbf{M Hedden}, \textbf{A\,S Levine},
  \href{http://dx.doi.org/10.1515/crelle-2014-0064} {\emph{Splicing knot
  complements and bordered {F}loer homology}}, J. Reine Angew. Math. 720 (2016)
  129--154 \xox{arXiv}{1210.7055}

\bibitem{Hockenhull-prep}
\textbf{T Hockenhull}, \emph{Duality Patterns in Knot {F}loer Homology}~In
  preparation

\bibitem{Hockenhull}
\textbf{T Hockenhull}, \emph{Holomorphic polygons and the bordered {H}eegaard
  {F}loer homology of link complements} (2018) \xox{arXiv}{1802.02443}

\bibitem{Hom-thesis}
\textbf{J Hom}, \emph{Heegaard {F}loer {I}nvariants and {C}abling}, PhD thesis,
  University of Pennsylvania (2011)
\ Available at \setbox0\hbox{\makeatletter\@url
{https://repository.upenn.edu/edissertations/329/}}
\href{https://repository.upenn.edu/edissertations/329/}
{\unhbox0}

\bibitem{KWZ}
\textbf{A Kotelskiy}, \textbf{L Watson}, \textbf{C Zibrowius}, \emph{Immersed
  curves in {K}hovanov homology} (2019) \xox{arXiv}{1910.14584}

\bibitem{LP-1}
\textbf{Y Lekili}, \textbf{A Polishchuk},
  \href{http://dx.doi.org/10.1112/topo.12064} {\emph{Auslander orders over
  nodal stacky curves and partially wrapped {F}ukaya categories}}, J. Topol. 11
  (2018) 615--644 \xox{arXiv}{1705.06023}

\bibitem{LP-2}
\textbf{Y Lekili}, \textbf{A Polishchuk},
  \href{http://dx.doi.org/10.1112/s0010437x20007150} {\emph{Homological mirror
  symmetry for higher-dimensional pairs of pants}}, Compos. Math. 156 (2020)
  1310--1347 \xox{arXiv}{1811.04264}

\bibitem{LOT-mor}
\textbf{R Lipshitz}, \textbf{P\,S Ozsv\'ath}, \textbf{D\,P Thurston},
  \emph{Heegaard {F}loer homology as morphism spaces}, Quantum Topol. 2 (2011)
  381--449 \xox{arXiv}{1005.1248}

\bibitem{LOT-bim}
\textbf{R Lipshitz}, \textbf{P\,S Ozsv\'ath}, \textbf{D\,P Thurston},
  \emph{Bimodules in bordered {H}eegaard {F}loer homology}, Geom. Topol. 19
  (2015) 525--724 \xox{arXiv}{1003.0598}

\bibitem{LOT-main}
\textbf{R Lipshitz}, \textbf{P\,S Ozsv\'ath}, \textbf{D\,P Thurston},
  \href{http://dx.doi.org/10.1090/memo/1216} {\emph{Bordered {H}eegaard {F}loer
  homology}}, Mem. Amer. Math. Soc. 254 (2018) viii+279 \xox{arXiv}{0810.0687}

\bibitem{OS-hfk-0}
\textbf{P\,S Ozsv\'ath}, \textbf{Z Szab\'o}, \emph{Holomorphic disks and knot
  invariants}, Adv. Math. 186 (2004) 58--116 \xox{arXiv}{math/0209056}

\bibitem{OS_latest}
\textbf{P\,S Ozsv\'ath}, \textbf{Z Szab\'o}, \emph{Algebras with matchings and
  knot {F}loer homology} (2019) \xox{arXiv}{1912.01657}

\bibitem{OS-bordered-matchings}
\textbf{P\,S Ozsv\'{a}th}, \textbf{Z Szab\'{o}},
  \href{http://dx.doi.org/10.4171/QT/127} {\emph{Bordered knot algebras with
  matchings}}, Quantum Topol. 10 (2019) 481--592 \xox{arXiv}{1707.00597}

\bibitem{Rasm_hfk}
\textbf{J Rasmussen}, \emph{Floer homology and knot complements}, PhD thesis,
  Harvard University (2003) \xox{arXiv}{math/0306378}

\bibitem{Zemke1}
\textbf{I Zemke}, \href{http://dx.doi.org/10.1112/plms.12227} {\emph{Connected
  sums and involutive knot {F}loer homology}}, Proc. Lond. Math. Soc. (3) 119
  (2019) 214--265 \xox{arXiv}{1705.01117}

\bibitem{pqMod}
\textbf{C Zibrowius}, \href{http://dx.doi.org/10.1112/topo.12120}
  {\emph{Peculiar modules for 4-ended tangles}}, J. Topol. 13 (2020) 77--158
  \xox{arXiv}{1712.05050}

\end{thebibliography}

\end{document}